\documentclass[bj,authoryear]{imsart}
\makeatletter
\renewcommand{\@biblabel}[1]{[#1]\hfill}
\usepackage{times}

\usepackage{amsbsy}
\usepackage{mathrsfs}
\usepackage{amsmath}
\usepackage{amsfonts}
\usepackage{amssymb}
\usepackage{graphicx}
\usepackage{times}
\usepackage{comment}
\usepackage{dsfont}
\usepackage{bm}

\usepackage{color}
\usepackage{fancyhdr}
\allowdisplaybreaks[4]

\usepackage{enumerate}
\usepackage{enumitem}

\newcommand{\cD}{\mathcal{D}}

\newcommand{\cO}{\mathcal{O}}
\newcommand{\cP}{\mathcal{P}}

\newcommand{\vf}{\mathbf{f}}
\newcommand{\vg}{\mathbf{g}}

\newcommand{\bE}{\mathbb{E}}\newcommand{\bF}{\mathbb{F}}
\newcommand{\bH}{\mathbb{H}}

\newcommand{\bN}{\mathbb{N}}
\newcommand{\bP}{\mathbb{P}}\newcommand{\bR}{\mathbb{R}}

\newcommand{\sD}{\mathscr{D}}
\newcommand{\sE}{\mathscr{E}}

\newcommand{\sH}{\mathscr{H}}

\DeclareMathOperator{\argmin}{argmin}

\usepackage{caption, subcaption}
\usepackage{float}
\usepackage{graphicx}
\usepackage{xcolor}
\usepackage{hyperref}
\hypersetup{colorlinks=true,linkcolor=red,citecolor=blue,urlcolor=blue}
\usepackage{url}
\usepackage{cleveref}
\usepackage{multirow}
\usepackage{float}
\usepackage{makecell}
\usepackage{amsmath}
\usepackage{amssymb}
\usepackage{mathrsfs}
\usepackage{placeins}
\usepackage{comment}
\usepackage{tcolorbox}
\usepackage{enumitem}
\usepackage{algorithm}
\usepackage{algpseudocode}
\usepackage{caption}
\tcbuselibrary{skins}
\newtcolorbox{roundedbox}[1][]{%
  colback=white, 
  colframe=black, 
  arc=3pt, 
  boxrule=0.8pt, 
  width=\linewidth, 
  boxsep=0.0pt, 
}

\startlocaldefs
\theoremstyle{plain}
\newtheorem{assumption}{Assumption}
\newtheorem{theorem}{Theorem}[section]
\newtheorem{proposition}[theorem]{Proposition}
\newtheorem{lemma}[theorem]{Lemma}

\newtheorem{remark}[theorem]{Remark}

\theoremstyle{definition}
\newtheorem{definition}[theorem]{Definition}

\endlocaldefs

\begin{document}

\begin{frontmatter}
\title{Smoothed estimation of Wasserstein barycenters}
\runtitle{Smoothed estimation of Wasserstein barycenters}

\begin{aug}

\author[A]{\inits{P.}\fnms{Pengtao}~\snm{Li}\ead[label=e1]{pengtaol@usc.edu}}
\author[B]{\inits{C.}\fnms{Changbo}~\snm{Zhu}\ead[label=e2]{czhu4@nd.edu}}
\author[A,C]{\inits{X}\fnms{Xiaohui}~\snm{Chen}\ead[label=e3]{xiaohuic@usc.edu}}
\address[A]{Department of Mathematics, University of Southern California, Los Angeles, CA, USA\printead[presep={,\ }]{e1,e3}}

\address[B]{Department of ACMS, University of Notre Dame, Notre Dame, IN, USA\printead[presep={,\ }]{e2}}

\address[C]{Thomas Lord Department of Computer Science, University of Southern California, Los Angeles, CA, USA}
\end{aug}

\begin{abstract}
This paper studies the statistical estimation of exact Wasserstein barycenters. Existing non-asymptotic results for empirical barycenters exhibit a severe curse of dimensionality. Motivated by the semi-dual formulation of the barycenter problem and its associated Sobolev optimization geometry, we develop a smoothness-aware approach that combines density estimation with Sobolev geometric structure to estimate the population barycenter. We establish nonparametric convergence rates for estimating both the barycenter functional and its minimizer, demonstrating how smoothness can substantially improve statistical performance.
\end{abstract}

\begin{keyword}
\kwd{Wasserstein barycenter}
\kwd{optimal transport}
\kwd{nonparametric estimation}
\end{keyword}

\end{frontmatter}

\section{Introduction}

Optimal transport (OT) has emerged as a central tool for comparing and averaging probability distributions, with applications spanning statistics, machine learning, computer vision, and the natural sciences~\citep{srivastava2018scalable,ZhuangChenYang2022,rabin2012wasserstein,Solomon_2015, Gramfort_Peyre_Cuturi2015fast,zhu2024spherical}.
Among its many constructions, the Wasserstein barycenter based on the theory of OT provides a ``horizontal" notion of the mean for distribution-valued data, capturing subtle geometric structure that is insensitive to ``vertical" or pointwise averaging. Formally, the Wasserstein barycenter was introduced in~\citep{AguehCarlier2011} as an intrinsic notion of averaging under the $2$-Wasserstein metric. Given \( m \) probability measures \( \mu_1, \dots, \mu_m \) supported on $\Omega \subset\mathbb R^d$ with finite second moments and a barycentric coordinate vector $\boldsymbol{\omega} =(\omega_1,\dots,\omega_m)$ satisfying $\omega_j\ge0$ and $\sum_{j=1}^m\omega_j=1$, the Wasserstein barycenter $\bar{\mu}_m$ is defined as any minimizer of the weighted variance functional $V : \cP_2(\Omega) \to \bR$ via
\begin{align}\label{Wasserstein_barycenter_functional}
    \min_{\mu \in \cP_2(\Omega)} \Big\{ V(\mu) := \sum_{j=1}^m \frac{\omega_j}{2}W_2^2(\mu,\mu_j) \Big\},
\end{align}
where $W_2(\mu, \nu)$ denotes the 2-Wasserstein distance between $\mu$ and $\nu$ and $\cP_2(\Omega)$ is the space of all probability measures on $\Omega$ with finite second moments. Despite its wide applications, the Wasserstein barycenter exhibits several structural properties that make both computation and statistical estimation intrinsically challenging. In contrast to the variational formulation of the Euclidean average, the objective functional $V$ is not geodesically convex in the natural $W_2$ geometry beyond two-marginal measures~\citep{ambrosio2008gradient,Chewi_COLT_Bures_Wasserstein_gradient_descent20}. Moreover in such regime $m \geq 3$, the Wasserstein barycenter has much less regularity than the Euclidean sample mean, and it is unstable to small perturbations, even when its uniqueness is ensured~\citep{KIM2017640,Santambrogio_Wang_example_16,Carlier_barycenter_24PTRF,ZhuangChenYang2022}. 

In practice, one often only has access to samples from the probability measures and must deal with the additional discretization and stochastic errors when approximating the population Wasserstein barycenter. In particular, if all input measures are discrete such as empirical distributions from samples, then the resulting barycenters are also discrete, and one can always choose a version that is provably sparse~\citep{anderes2016discrete}, imposing a fundamental obstacle in obtaining a reliable estimator of an absolutely continuous population barycenter. As a consequence of these roadblocks to performing statistical estimation of Wasserstein barycenters, most of the existing results focus on the regularized barycenter \citep{Bigot_regularized_barycenter19, CarlierEichingerKroshnin_entropic-barycenter, Chizat_doubly_regularized_barycenter25}. With entropy regularization, the multi-marginal system~\eqref{Wasserstein_barycenter_functional} is much better posed with smoothness and stability, and the estimation of the associated barycenters admits substantially stronger and more reliable statistical guarantees~\citep{Li_Chen_Multimarginal25}. Nonetheless, this comes at the cost of altering the underlying optimal transport geometry, with unfavorable dependence on the (small) regularization parameter. 

\subsection{Contributions}
This paper narrows this gap by studying the statistical sample complexity for estimating the \emph{exact} Wasserstein barycenter from point cloud data. Our motivation stems from recent advances in barycenter optimization that reveal strong duality in the Sobolev geometry for the Kantorovich potentials~\citep{kim2025sobolevgradientascentoptimal,KimYaoZhuChen2025_barycenter-nonconvex-concave}. By integrating such optimization geometry and density estimation technique, we introduce a semi-dual formulation-based approach that utilizes the smoothness of the marginal input distributions. 
Under a standard sampling model and assumptions, we establish nonparametric estimation rates for the barycenter cost functional 
$\sE(\mu_1,\dots,\mu_m) := \min_{\mu \in \cP_2(\Omega)} V(\mu)$ and the Wasserstein barycenter $\bar{\mu}_m$. While existing non-asymptotic bounds (cf.~\eqref{eqn:carlie_rate} below) for empirical Wasserstein barycenters ~\citep{Carlier_barycenter_24PTRF} apply under minimal assumptions and consequently suffer from severe curse-of-dimensionality effects, our analysis shows that additional smoothness of the marginal distributions can substantially improve statistical rates in a way similar to density estimation under the $W_p$ metric~\citep{Niles_smooth_density_minimax22}. As an extension of our result, we also derive sample complexity guarantees under a two-layer sampling model. To the best of our knowledge, this is the first work to establish smoothness-aware nonparametric rates for exact Wasserstein barycenter related quantities. 



\subsection{Related work}



There are relatively scarce works establishing non-asymptotic statistical guarantees for estimating unregularized Wasserstein barycenters from data. Suppose that for each \( j \in [m] \), one observes independent and identically distributed (i.i.d.)\ samples \( \{X_{j1}, \dots, X_{jn}\} \sim \mu_j \), and that the samples across different \( j \) are independent.
In this setting, \citet{Carlier_barycenter_24PTRF} studied the empirical Wasserstein barycenter associated with the empirical measures \( \hat{\mu}_j = n^{-1} \sum_{i=1}^n \delta_{X_{ji}}\), defined as  $\hat{\mu}^{(n)}_m \in \argmin_{\mu\in\cP_2(\bR^d)} \frac{1}{m}\sum_{j=1}^m W_2^2(\mu, \hat{\mu}_j).$ Under suitable regularity conditions, they established the following error bounds
\begin{equation}
\label{eqn:carlie_rate}
\bE W_2^2(\hat{\mu}^{(n)}_m,\bar{\mu}_m) \lesssim \begin{cases}
    n^{-1/12},   & d<4, \\
    n^{-1/12}(\log n)^{1/6},   & d=4,\\
    n^{-1/3d},   & d>4.
\end{cases}
\end{equation}
When $\mu_j$'s are further discrete measures with $N$ finite support points, an improved $\cO(\sqrt{N/n})$ rate of convergence for estimating barycenter functional was recently obtained \citep{Portales_Pauwels_Cazelles_sample_complexity_dsicrete25}.

There is another parallel line of works considering the convergence of barycenter problems with measures being directly observed (without point clouds or further discretization) and sampled in the Wasserstein space, i.e., the barycenter associated with $\bP\in\cP_2(\cP_2(\bR^d))$ defined as
\begin{equation*}
    \mu^*\in\displaystyle \argmin_{\mu\in\cP_2(\bR^d)} \int_{\cP_2(\bR^d)} W_2^2(\mu,\nu) d\bP(\nu).
\end{equation*}
Suppose that $\mu_1,\dots,\mu_m$ are random elements drawn independently from $\bP$, and every $\mu\in \text{supp}(\bP)$ is the pushforward of the barycenter $\mu^*$ by the gradient of a $\kappa$-strongly convex and $\lambda$-smooth function with $\lambda-\kappa<1$ (a.k.a. Assumption \ref{assumption_Thibaut_22_fast} in Section~\ref{Sec_two_staged_sampling}), \cite{Thibaut_Quentin_Philippe_Austin_22fast} showed that 
$\bE W_2^2(\bar{\mu}_m,\mu^*) \lesssim m^{-1},$
where $\bar{\mu}_m$ is the equally weighted barycenter of $\mu_1, \dots, \mu_m$. Specializing to Gaussian measures, \cite{Chewi_COLT_Bures_Wasserstein_gradient_descent20} derived global algorithmic convergence rates of gradient methods for estimating the Bures-Wasserstein barycenter together with their statistical sample complexity. Under milder assumptions, \cite{Carlier_barycenter_24PTRF} showed a much slower convergence rate $\bE W_2(\bar{\mu}_m,\mu^*) \lesssim m^{-1/30}.$ 



\subsection{Notations}
Let $\Omega\subset\bR^d$ be a compact and convex set. The set of probability measures on $\Omega$
with finite second moments is denoted by $\cP_2(\Omega)$.
For any $\mu\in\cP_2(\Omega)$, $T\sharp\mu$ represents the pushforward of $\mu$ by $T:\Omega\to\Omega$. With a slight abuse of notation, we also use $\mu$ to denote density for absolutely continuous probability. $f^*(x) := \sup_{y\in\Omega}\{\langle x,y \rangle - f(y)\}$ denotes the Young-Fenchel transform of $f$. For any $f:\Omega\to \bR$ and continuous symmetric function $c:\Omega\times\Omega\to\bR$,
the $c$-transform of $f$, denoted as $f^c:\Omega\to\bR$, is defined as $f^c(x):=\inf_{y\in\Omega}\{c(x,y)-f(y)\}.$
We primarily focus on the case that $c(x,y):=\|x-y\|_2^2/2$ in this paper. We use the notation $x \lesssim y$ to indicate  $x \leq C y$ for some constant  $C > 0$ that may depend on all parameters of the statistical problem except sample size $n$. $C^{k,\alpha}(\Omega)$ collects all functions with $\alpha$-H\"older $k$-th order derivatives on $\Omega$. The homogeneous Sobolev space $\dot {\bH}^1:=\{f:\Omega\to\bR,\,\,
\int_\Omega f\,dx=0 \ \text{and}\ \|f\|_{\dot {\bH}^1}<\infty \}$
is a Hilbert space equipped with the $\dot {\bH}^1$-inner product $\langle f,g\rangle_{\dot {\bH}^1}:=\int_\Omega \langle \nabla f(x),\nabla g(x)\rangle\,dx$ and $\dot {\bH}^1$-norm $\|f\|_{\dot {\bH}^1}=\sqrt{\langle f,f\rangle_{\dot {\bH}^1}}$. For any two probability densities $\mu$ and $\nu$, viewed as elements of
the dual space $\dot{\bH}^{-1}(\Omega) $ of $\dot{\bH}^1(\Omega)$, we define $\|\mu-\nu\|_{\dot{\bH}^{-1}}
:= \sup\{
\int_\Omega \phi\, d(\mu-\nu)
\,:\,
\phi \in \dot{\bH}^1(\Omega),\ \|\phi\|_{\dot{\bH}^1}\le 1
\}$. The $p$-Wasserstein distance $W_p$ ($p\geq1$) is defined as $W^p_p(\mu,\nu) =\inf_{\pi\in\Pi(\mu,\nu)}\int \|x-y\|^p d \pi(x,y),$
where $\Pi(\mu,\nu)$ denotes the the set of all coupling of $\mu$ and $\nu$.  The inhomogeneous norm $\bH^s$, $s\in\bN$ is defined as $\|f\|_{\bH^s} := ( \sum_{|\alpha|\leq s} \| D^{(\alpha)} f\|_{2}^2 )^{1/2},$
where $D^{(\gamma)} f :=\frac{\partial^{|\gamma|}}{\partial x_1^{\gamma_1} \dots\partial x_d^{\gamma_d} }$ for $|\gamma| = \sum_i \gamma_i$ for a multi-index $\gamma$. For any random vector $\xi$, we denote its law by $\mathcal{L}(\xi)$.

\subsection{Organization}
The rest of the paper is organized as follows. 
The smoothness-aware semi-dual approach for barycenter estimation is introduced in Section \ref{Section_smoothness_adaptive_semi_dual_based_approach}. The main theorems stating the estimation error bounds of barycenter-related quantities are presented in Section \ref{Sec_sample_complexity}. We apply our results to a two-layer sampling model in Section \ref{Sec_two_staged_sampling}. The Appendix contains proofs that are omitted from the main text and associated technical tools.

\section{Smoothness-aware semi-dual approach}\label{Section_smoothness_adaptive_semi_dual_based_approach}
In this section, we introduce the semi-dual formulation-based approach for barycenter estimation that leverages the smoothness of the marginal input distributions. The resulting estimator is shown to mitigate the curse-of-dimensionality in Section \ref{Sec_sample_complexity}.

\subsection{Semi-dual formulation of barycenter cost functional}
We begin with the dual formulation of the Wasserstein barycenter functional $\sE(\mu_1,\dots,\mu_m) = \min_{\mu \in \cP_2(\Omega)} V(\mu)$ in (\ref{Wasserstein_barycenter_functional}). Define
\begin{equation}\label{Wasserstein_barycenter_dual_functional}
\sD_{[\mu_1,\dots,\mu_m]}(f_1, \ldots, f_{m-1}) = \sum_{i=1}^{m-1} \omega_i \int f_i^c \, d\mu_i + \omega_m \int f_{\text{mix}}^c \, d\mu_m,
\end{equation}
where
$f_{\text{mix}} = - \sum_{j=1}^{m-1} \frac{\omega_j}{\omega_m} f_j$. 
As shown by \cite{kim2025sobolevgradientascentoptimal}, for absolutely continuous probabilities $\mu_1,\dots,\mu_m$, strong duality holds:
\begin{align}\label{equation_strong_duality}
\sE (\mu_1,\dots,\mu_m) = \sup_{f_1,\dots, f_{m-1}} \sD_{[\mu_1,\dots,\mu_m]}(f_1,\dots,f_{m-1}),
\end{align}
where the supremum is over continuous functions $f_{1},\dots,f_{m-1}$. One could easily check that for $a_j\in\bR$, $(f_{*,1}+a_1,\dots,f_{*,m-1}+a_{m-1})$ also optimizes the equation above if 
$(f_{*,1},\dots,f_{*,m-1})$ does. To account for this translation invariance of dual potentials, we distinguish the optimal dual potential $(f_{*,1},\dots,f_{*,m-1})$ with $\sup_{x_i\in\Omega}f_{*,i}(x_i) = 0,\, i\in[m-1]$ from now on. We refer to $\vf_* := (f_{*,1},\dots,f_{*,m-1}, f_{*,\text{mix}})$ as the Kantorovich potential and 
the unique Wasserstein barycenter $\bar{\mu}_m$ can be characterized by 
\begin{align}\label{primal_dual_relation}
    \bar{\mu}_m = T_{f^c_{*,i}}\sharp\mu_i=T_{f^c_{*,\text{mix}}} \sharp\mu_m,
\end{align}
where $T_h := \text{id} - \nabla h$.

\subsection{Smoothed barycenter estimation}


Based on the semi-dual formulation (\ref{equation_strong_duality}) of the barycenter functional, we are now ready to present our smoothing approach for barycenter estimation in this section. We work with samples $\{X_{ji} : j \in [m],\, i \in [n]\}$ 
and first construct a smoothed estimator of the marginal probability density from the data. Concretely, it is obtained by expanding the unknown density in a sufficiently regular wavelet basis \citep{Tsybakov_wavelets_textbook}, estimating each wavelet coefficient via its empirical average from the samples, and then truncating the resulting expansion \citep{Niles_smooth_density_minimax22,Tengyuan_Liang_JMLR21,NEURIPS2018_Singh_Uppal_Li_Li_Zaheer_Poczos,NEURIPS2019_Uppal_Singh_Poczos}. In particular, for density $\mu_j$ with upper and lower bounds and certain regularity (i.e., Assumptions \ref{assumption_upper_lower_bounded_density}-\ref{assumption_smooth_besov_density} below), one can construct a smoothed version $\tilde{\mu}_j$ of the empirical distribution with required level of accuracy as in Lemma \ref{Lemma:jonathan_smooth_dentisy_estimation}.
Then we solve the semi-dual \eqref{Wasserstein_barycenter_dual_functional} with the smoothing estimate of the marginal densities and reconstruct the barycenter using the pushforward relation~\eqref{primal_dual_relation}. Define 
\begin{multline}
 \bF_{\alpha, \beta} := \Bigl\{  f \in C^{1,1}(\Omega) \; \big| \; \text{ for any}\: x,y \in\Omega, \; \varphi(\cdot) := \frac{\|\cdot\|^2}{2} - f (\cdot),\\
\frac{\alpha}{2}\,\|x - y \|_2^2\leq \varphi(x)-\varphi(y)-\langle \nabla \varphi(y), x - y\rangle
\le \frac{\beta}{2}\|x - y \|_2^2  \Bigr\} .\nonumber
\end{multline}
Our pseudo-algorithm is summarized in Algorithm~\ref{alg:main}.

\begin{algorithm}
\caption{Smoothness-aware barycenter estimation procedure.\label{alg:main}}
\begin{algorithmic}[1]
\Require Data points $X_{j1},\dots, X_{jn}$ for $j \in [m]$.

\State Construct smoothed density estimator $\tilde{\mu}_j$ from $X_{j1},\dots, X_{jn}$ using truncated wavelet expansion.

\State Maximize $\widetilde{\sD}(\vf) := \sum_{i=1}^{m-1} \omega_i \int f_i^c \, d\tilde{\mu}_i + \omega_m \int f_{\text{mix}}^c \, d\tilde{\mu}_m$ over $\bF^m_{\alpha,\beta}$ and extract the solution $ ( \tilde{f}_{*,1}, \dots,\tilde{f}_{*,\text{mix}}) $.

\State Set $\tilde{\vf}_* =( \tilde{f}_{*,1}, \dots,\tilde{f}_{*,m-1})$ and $\widetilde{\sE}_n=\widetilde{\sD}(\tilde{\vf}_*)$.

\State\Return Smoothed empirical barycenter $\tilde{\mu}_m^{(n)} = T_{\tilde{f}_{*,i}^c} \sharp \tilde{\mu}_i$.
\end{algorithmic}
\end{algorithm}

In practice, the optimization step in Algorithm~\ref{alg:main} is solved using a Sobolev gradient ascent (SGA) algorithm proposed in~\cite{kim2025sobolevgradientascentoptimal}, which focuses on the algorithmic convergence rate for absolutely continuous marginal input distributions over a broader potential functional class $\bF^m_{0,\infty}$. Different from existing sub-linear algorithmic guarantees~\citep{kim2025sobolevgradientascentoptimal,KimYaoZhuChen2025_barycenter-nonconvex-concave}, statistical sample complexity needs to operate on a smaller subclass of potentials with $0 < \alpha \leq \beta < \infty$. Within this $\bF^m_{\alpha,\beta}$ class, we will see that the semi-dual objective functional has a strong curvature lower bound in the Sobolev geometry; see Lemma~\ref{Lemma:Strong_concavity}. This strong curvature property turns out to be a key structural ingredient enabling our nonparametric convergence analysis in Section~\ref{Sec_sample_complexity}. We remark that our procedure is agnostic to the construction of an estimator 
$\tilde{\mu}_j$
from discrete samples, namely one can adopt other density estimators than the wavelet expansion, and our analysis is robust to the choice of the initial density estimation.

\subsection{Sobolev optimization geometry}\label{Sec_Strong_concavity_optimization_geometry}
In this section, we recognize an appropriate optimization dual
geometry for the Wasserstein barycenter problem (\ref{Wasserstein_barycenter_functional}), which will serve as a foundation for subsequent statistical complexity in Section \ref{Sec_sample_complexity}. Specifically, by identifying the Sobolev geometric structure of the semi-dual functional $\sD_{[\mu_1,\dots,\mu_m]}(\cdot)$ that
respects the structure of the Wasserstein barycenter, we pave the way for nonparametric rate of convergence.

We endow the dual optimization problem \eqref{Wasserstein_barycenter_dual_functional}
with a Sobolev product–Hilbert geometry, which will play a central role in our statistical analysis.
Specifically, let
$
\sH_{m-1} := (\dot{\bH}^1(\Omega))^{m-1}
$
denote the Cartesian product of $(m-1)$ copies of the homogeneous Sobolev space
$\dot{\bH}^1(\Omega)$. For
$\bm{\varphi} := (\varphi_1,\ldots,\varphi_{m-1}), \bm{\psi} := (\psi_1,\ldots,\psi_{m-1}) \in \sH_{m-1}$, we define the
weighted inner product
\[
\langle \bm{\varphi}, \bm{\psi} \rangle_{\sH_{m-1}}
:= \sum_{i=1}^{m-1} \omega_i
\int_{\Omega} \langle \nabla \varphi_i(x), \nabla \psi_i(x) \rangle \, dx,
\]
and the associated norm $
\| \bm{\varphi} \|_{\sH_{m-1}}^2
:= \sum_{i=1}^{m-1} \omega_i
\int_{\Omega} \| \nabla \varphi_i(x) \|^2 \, dx$. Since $\sH_{m-1}$ is a finite product of Hilbert spaces,
its dual space is identified as $\sH_{m-1}' = (\dot \bH^{-1}(\Omega))^{m-1}$, where the duality pairing is taken componentwise. We shall consider the gradient of $\sD_{[\mu_1,\dots,\mu_m]}(\vf)$ as an element in the dual space $\sH_{m-1}'$.

\begin{definition}[Gradient of the dual objective] \label{Gradient_of_the_dual_objective}
Define $\nabla \sD: \sH_{m-1} \to \sH_{m-1}'$ as
\begin{align}
\left \langle \nabla \sD_{[\mu_1,\dots,\mu_m]} (f_1, \ldots, f_{m-1}), (\varphi_1, \ldots, \varphi_{m-1}) \right\rangle
:= \sum_{i=1}^{m-1} \omega_i \int \varphi_i \, 
d\, \left( T_{f_{\text{mix}}^c}\sharp \mu_m - T_{f_i^c}\sharp\mu_i \right).
\end{align}
\end{definition}

\noindent In particular, we readily get the following expression for the norm of the above gradient.
\begin{proposition}[Norm of dual objective gradient]\label{proposition_Norm_dual_objective_gradient}
 $$\|\nabla \sD_{[\mu_1,\dots,\mu_m]}(f_1,\dots,f_{m-1})\|_{\sH_{m-1}'} = \left( \sum_{i=1}^{m-1} \omega_i \|T_{f^c_i}\sharp\mu_i-T_{f^c_{\text{mix}}} \sharp\mu_m\|^2_{\dot{\bH}^{-1}} \right)^{1/2}.$$ 
\end{proposition}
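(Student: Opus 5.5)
The plan is to compute the dual norm of the linear functional $\nabla\sD$ in the weighted product Hilbert space $\sH_{m-1}$ directly from the definitions. Write $\nu_i := T_{f_{\text{mix}}^c}\sharp\mu_m - T_{f_i^c}\sharp\mu_i$, a signed measure of total mass zero, so that $\int \varphi_i\,d\nu_i$ is well defined on $\dot{\bH}^1$ (constants are annihilated). By Definition~\ref{Gradient_of_the_dual_objective},
\[
\langle \nabla\sD, \bm\varphi\rangle = \sum_{i=1}^{m-1}\omega_i \langle \nu_i,\varphi_i\rangle .
\]
The dual norm of $\sH_{m-1}'$ is taken with respect to the weighted norm $\|\bm\varphi\|_{\sH_{m-1}}^2=\sum_i\omega_i\|\varphi_i\|_{\dot{\bH}^1}^2$. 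So the claim reduces to the elementary fact that the dual of a weighted $\ell^2$-product of Hilbert spaces, with the pairing also weighted by $\omega_i$, is the weighted $\ell^2$-product of the dual spaces with the same weights.

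\emph{Upper bound.} For each $i$, the definition of $\|\cdot\|_{\dot{\bH}^{-1}}$ gives $|\langle\nu_i,\varphi_i\rangle|\le \|\nu_i\|_{\dot{\bH}^{-1}}\|\varphi_i\|_{\dot{\bH}^1}$. Applying Cauchy--Schwarz in $\bR^{m-1}$ with weights $\omega_i$, to the vectors $(\sqrt{\omega_i}\|\nu_i\|_{\dot{\bH}^{-1}})_i$ and $(\sqrt{\omega_i}\|\varphi_i\|_{\dot{\bH}^1})_i$, yields
\[
|\langle\nabla\sD,\bm\varphi\rangle|\le \Big(\sum_i\omega_i\|\nu_i\|^2_{\dot{\bH}^{-1}}\Big)^{1/2}\|\bm\varphi\|_{\sH_{m-1}} .
\]

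\emph{Lower bound.} I would use the Riesz representation. For each $i$, take the unique $u_i\in\dot{\bH}^1$ solving the weak Neumann problem $\int\langle\nabla u_i,\nabla\phi\rangle\,dx=\langle\nu_i,\phi\rangle$ for all $\phi\in\dot{\bH}^1$. Then $\|u_i\|_{\dot{\bH}^1}=\|\nu_i\|_{\dot{\bH}^{-1}}$, and choosing $\varphi_i=u_i$ gives equality both in each pairing and in Cauchy--Schwarz. Normalizing $\bm u$ in $\sH_{m-1}$ then attains the bound. The same argument can be phrased as identifying the Riesz representative of $\nabla\sD$ in $\sH_{m-1}$ as $\bm u$ itself, because the weights in the pairing and in the inner product match. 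Consequently $\|\nabla\sD\|_{\sH'_{m-1}}=\|\bm u\|_{\sH_{m-1}}$, which is the stated formula, since $\|\nu_i\|=\|T_{f_i^c}\sharp\mu_i-T_{f^c_{\text{mix}}}\sharp\mu_m\|$.

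The main obstacle is ensuring that $\nu_i$ genuinely defines a bounded functional on $\dot{\bH}^1$, that is, $\|\nu_i\|_{\dot{\bH}^{-1}}<\infty$. If it is infinite, the identity holds trivially as $\infty=\infty$, because the upper-bound argument is unaffected and the lower bound follows by taking near-maximizers $\varphi_i$ in a single coordinate. When it is finite, the Riesz step is justified on the Hilbert space $\dot{\bH}^1(\Omega)$, using Poincaré on the bounded convex $\Omega$ so that the seminorm is a norm. The zero-mass property of $\nu_i$ makes the pairing independent of the additive constants quotiented out in $\dot{\bH}^1$.
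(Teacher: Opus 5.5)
Your proposal is correct and follows the same route as the paper's proof. You bound each pairing by the $\dot{\bH}^1$--$\dot{\bH}^{-1}$ duality, apply weighted Cauchy--Schwarz, and attain equality with the normalized Neumann Riesz representatives $\varphi_i \propto (-\Delta)^{-1}\bigl(T_{f_{\text{mix}}^c}\sharp\mu_m - T_{f_i^c}\sharp\mu_i\bigr)$. The paper leaves implicit the points you spell out: zero total mass, Poincar\'e on $\Omega$, and the infinite-norm case.
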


\noindent Under Sobolev geometry, we have the strong concavity of the dual objective functional $\sD_{[\mu_1,\dots,\mu_m]}(\cdot)$. 

\begin{lemma}[Strong concavity of the dual objective]\label{Lemma:Strong_concavity}
Given $\mu_1,\dots,\mu_m$ absolutely continuous with $\mu_i\geq L, i\in[m]$ and $\vg := (g_1,\dots, g_{m-1}) \in  \bF^{m-1}_{\alpha,\beta}$ and $\vf := (f_1,\dots, f_{m-1})\in  \bF^{m-1}_{\alpha,\beta}$ with 
$\frac{\|\cdot\|^2}{2}-g_{\text{mix}}(\cdot)$ convex, it holds that for $\lambda =\frac{L\alpha^{d+1}}{\beta^2}$,
\begin{equation}\label{strong_concavity}
\sD_{[\mu_1,\dots,\mu_m]}(\vg) - \sD_{[\mu_1,\dots,\mu_m]}(\vf) - \langle \nabla \sD_{[\mu_1,\dots,\mu_m]}(\vf), \vg - \vf \rangle \leq -\frac{\lambda}{2} \| \vg - \vf \|^2_{\sH_{m-1}}.
\end{equation}
\end{lemma}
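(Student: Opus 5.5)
The inequality separates into $m$ one-marginal terms. Write $\sD(\vf)=\sum_{i=1}^{m-1}\omega_i J_{\mu_i}(f_i)+\omega_m J_{\mu_m}(f_{\text{mix}})$ with $J_\mu(f):=\int f^c\,d\mu$. Since $f\mapsto f_{\text{mix}}$ is linear, the map $\vf\mapsto\omega_m J_{\mu_m}(f_{\text{mix}})$ is concave in $\vf$ whenever $J_{\mu_m}$ is concave. Its first-order remainder is therefore nonpositive. Concretely, the first variation of $f\mapsto \int f^c d\mu$ in direction $\varphi$ is $-\int \varphi\, d(T_{f^c}\sharp\mu)$; this is the computation behind Definition~\ref{Gradient_of_the_dual_objective}, where the mix term contributes $+\omega_i\int\varphi_i\,d(T_{f^c_{\text{mix}}}\sharp\mu_m)$. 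The general statement is that $\int f^c d\mu$ is concave in $f$, because $f^c$ is an infimum of affine functions of $f$, with supergradient $-T_{f^c}\sharp\mu$. This holds for any $f$ (in particular for $g_{\text{mix}}$ with $\|\cdot\|^2/2-g_{\text{mix}}$ convex, which guarantees the $c$-transform structure and the a.e.-defined map). So the last term only helps, and it suffices to show for each $i\le m-1$ and $f,g\in\bF_{\alpha,\beta}$:
\[
\int g^c d\mu - \int f^c d\mu + \int (g-f)\,d(T_{f^c}\sharp\mu) \le -\frac{\lambda}{2}\int_\Omega \|\nabla g-\nabla f\|^2dx .
\]

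For the single-marginal estimate I pass to convex potentials. Set $\varphi=\|\cdot\|^2/2-f$, so $f^c(x)=\|x\|^2/2-\varphi^*(x)$ and $T_{f^c}=\nabla\varphi^*$. Similarly set $\psi=\|\cdot\|^2/2-g$. Then $g-f=\varphi-\psi$, and the left side becomes
\[
-\int \bigl[\psi^*-\varphi^*-\langle \nabla\varphi^*, \cdot\rangle\text{-type terms}\bigr].
\]
Precisely, it equals $-\int\bigl(\psi^*(x)-\varphi^*(x)+\psi(y)-\varphi(y)\bigr)d\mu(x)$ with $y=\nabla\varphi^*(x)$. Using $\varphi^*(x)+\varphi(y)=\langle x,y\rangle$, the integrand is $\psi^*(x)+\psi(y)-\langle x,y\rangle\ge0$, a Fenchel–Young gap. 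Since $\psi\in\bF_{\alpha,\beta}$, $\psi$ is $\beta$-smooth, so $\psi^*$ is $1/\beta$-strongly convex. The standard gap bound then gives
\[
\psi^*(x)+\psi(y)-\langle x,y\rangle\ \ge\ \tfrac{1}{2\beta}\|x-\nabla\psi(y)\|^2=\tfrac1{2\beta}\|\nabla\varphi(y)-\nabla\psi(y)\|^2,
\]
because $x=\nabla\varphi(y)$. This bound relies on $\Omega$ being convex and compact and on extending the potentials appropriately; I would cite the standard version. The left side is thus at most $-\frac{1}{2\beta}\int \|\nabla\varphi-\nabla\psi\|^2(y)\,d(\nabla\varphi^*\sharp\mu)(y)$.

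It remains to lower bound the density of $\nu:=\nabla\varphi^*\sharp\mu$ by a constant on $\Omega$, which I expect to be the main obstacle. By the change of variables (Monge–Ampère), $\nu(y)=\mu(\nabla\varphi(y))\det D^2\varphi(y)\ge L\alpha^d$, using $D^2\varphi\succeq\alpha I$. One must check that $\nabla\varphi(y)\in\Omega$ so that $\mu\ge L$ applies there; I would handle this via the $c$-transform being over $\Omega$, i.e.\ the pushforward lands in $\Omega$ and is supported on the image. The bookkeeping of supports is the delicate part. Since $\nabla\varphi-\nabla\psi=\nabla g-\nabla f$, combining gives the constant $L\alpha^d/\beta$, which dominates $\lambda=L\alpha^{d+1}/\beta^2$ because $\alpha\le\beta$. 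Should the exact constant require the conjugate route, I would instead bound $\|\nabla\psi^*-\nabla\varphi^*\|$ by $\frac1\beta$-comparisons, which costs the extra factor $\alpha/\beta$. Summing over $i$ with weights $\omega_i$ yields \eqref{strong_concavity}.
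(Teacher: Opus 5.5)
Your decomposition matches the paper's. The mix part of the remainder is nonpositive, each single-marginal part is minus a Fenchel--Young gap (the paper's $-\int \xi_i(T_{f_i^c}(x)\mid T_{g_i^c}(x))\,d\mu_i(x)$ is the same quantity), and the last step is a change of variables with $\nu=\nabla\varphi^*\sharp\mu$, whose density $\mu(\nabla\varphi)\det D^2\varphi$ is at least $L\alpha^d$ where it is positive.

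The genuine gap is the step you flag as the main obstacle, and your sketched fix does not close it. The right-hand side of \eqref{strong_concavity} integrates $\|\nabla f-\nabla g\|^2$ against $dx$ on all of $\Omega$. So you need $\nu\ge L\alpha^d$ on all of $\Omega$, not just that $\nu$ lives in $\Omega$. You rightly ask whether $\nabla\varphi(y)\in\Omega$, but nothing in $\bF_{\alpha,\beta}$ guarantees it, and where it fails $\nu$ vanishes. Indeed $\nu$ is carried by $\nabla\varphi^*(\Omega)$, a proper subset of $\Omega$ whenever $\nabla\varphi$ pushes part of $\Omega$ outside $\Omega$.

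The inequality is then false. Take the following data:
\begin{itemize}
\item $d=1$, $\Omega=[-1,1]$, $m=3$, $\omega_i=1/3$, $\mu_i$ uniform (so $L=1/2$), $\alpha=1$, $\beta=2$;
\item $f_1=f_2=-0.2x^2$, so $\varphi_i=0.7x^2$ and $T_{f_i^c}(x)=x/1.4$;
\item $g_1=f_1+\epsilon h$ and $g_2=f_2-\epsilon h$, with $h$ smooth, of mean zero, supported in $[0.8,0.95]$, and $\epsilon$ small.
\end{itemize}
Then $g_{\text{mix}}=f_{\text{mix}}=0.4x^2$, so the convexity hypothesis holds. Every $c$-transform is unchanged on $\Omega$, because the infima are attained in $[-0.72,0.72]$ with a quadratic margin. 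Also $g_i-f_i$ vanishes on the support of $T_{f_i^c}\sharp\mu_i$. Hence the left-hand side is $0$, while the right-hand side is $-\frac{\lambda}{3}\epsilon^2\int_{-1}^{1}|h'|^2\,dx<0$.

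The paper's step (ix) asserts $\rho_i\ge L\alpha^d$ on $\Omega$ under the same unjustified support assumption, so you have reproduced its hole rather than created a new one. A correct version needs an extra hypothesis, or a different right-hand side. One option is to require $\nabla\varphi_i(\Omega)\subseteq\Omega$ for the potentials involved, together with the interior first-order conditions $\nabla\varphi(\nabla\varphi^*(x))=x$ that both arguments already use. The other is to weight the right-hand side by $\nu_i$ instead of Lebesgue measure.

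Where you deviate from the paper, one change is an improvement and one is not safe. For the mix term, your argument is cleaner than the paper's Bregman computation: $g^c(x)\le c(x,T_{f^c}(x))-g(T_{f^c}(x))$ because $T_{f^c}(x)$ attains the infimum defining $f^c(x)$. It also shows that convexity of $\frac{\|\cdot\|^2}{2}-g_{\text{mix}}$ is not needed.

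For the single-marginal gap, you use $1/\beta$-strong convexity of $\psi^*$ to get $\frac{1}{2\beta}\|\nabla\varphi(y)-\nabla\psi(y)\|^2$ and the constant $L\alpha^d/\beta$. The paper instead uses $\alpha$-strong convexity of $\psi$ along the segment from $T_{g^c}(x)$ to $T_{f^c}(x)$ inside $\Omega$, then the $\beta$-Lipschitz comparison of Lemma~\ref{change_of_variable_middle_step}, which costs the factor $\alpha/\beta$. The trouble is that the $\psi^*$ in $g^c=\frac{\|\cdot\|^2}{2}-\psi^*$ is the conjugate of $\psi$ restricted to $\Omega$. It is affine on any region where the maximizer stays at one boundary point, so it is not $1/\beta$-strongly convex. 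Your estimate amounts to the co-coercivity bound $D_\psi(y,y')\ge\frac{1}{2\beta}\|\nabla\psi(y)-\nabla\psi(y')\|^2$. That bound holds on $\bR^d$, but on a bounded convex set it needs $\psi$ to extend to a $\beta$-smooth convex function on $\bR^d$, which $\beta$-smoothness on $\Omega$ alone does not give. Your fallback is precisely the paper's argument and uses only strong convexity and Lipschitz gradients along segments of $\Omega$, so that is the version to keep.
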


\section{Sample complexity analysis}\label{Sec_sample_complexity}
In this section, we study the convergence rate of the proposed estimator $\tilde\mu_m^{(n)}$ to the population Wasserstein barycenter $\bar\mu_m$ as a function of the per-distribution sample size $n$. Under standard assumptions in the literature, we establish nonparametric estimation rates for both the barycenter functional and the Wasserstein barycenter.

\subsection{Assumptions}
We begin by introducing a set of regularity assumptions on the input measures $\{\mu_i\}_{i=1}^m$. These conditions ensure well-posedness of the barycenter problem, stability of optimal transport maps, and control of the statistical error induced by finite sampling.

\begin{assumption}\label{assumption_upper_lower_bounded_density} 
For each $i\in [m]$, $\mu_i$ is compactly supported on $\Omega$ and absolutely continuous with its density $L\leq \mu_i \leq U,$, for some $L,U\in(0,\infty)$. 
\end{assumption}

\begin{assumption}\label{assumption_smooth_besov_density}
  $\mu_i\in \bH^{s}(\ell;L)$ for some constants $s,\ell,L\in (0,+\infty)$, with 
        \begin{align*}
    \bH^{s}(\ell;L) &:= \bH^{s}(\ell) \cap \{ f, f\geq L\},  \\
    \bH^{s}(\ell) &:= \{ f\in L^2(\Omega), \|f\|_{\bH^{s}}\leq \ell, \int f=1 , f\geq 0 \}.
\end{align*}
\end{assumption}

Assumptions \ref{assumption_upper_lower_bounded_density}-\ref{assumption_smooth_besov_density} have been widely used in the nonparametric density estimation literature \citep{Niles_smooth_density_minimax22, Tsybakov_nonparametric_2009}, as well as in the optimal transport literature \citep{Hutter_Rigollet_minimax_oT_map21, Delalande_semi_OT22, Figali_OT_map15, Chewi_statistical_OT_book, Carlier_barycenter_24PTRF}.


\begin{assumption}\label{assumption_Wasserstein_potential}
The Kantorovich potential $\vf_* \in \bF_{\alpha, \beta}^m$ for some $0<\alpha\leq\beta<\infty$.  
\end{assumption}



Assumption~\ref{assumption_Wasserstein_potential} is a regularity condition on the Brenier
potentials transporting the Wasserstein barycenter $\bar\mu_m$ to each input measure $\mu_i$.
Similar regularity assumptions have been exploited in the study of optimal transport,
both for convergence analysis and for the design of computational algorithms. For example,
the same condition was adopted by \citet{Thibaut_Quentin_Philippe_Austin_22fast} to
derive parametric rates of convergence for empirical barycenters as the number
of input measures $m$ increases. Motivated by the regularity theory of Caffarelli
\citep{caffarelli1996boundary}, \citet{paty2020regularity} proposed an algorithm for optimal
transport between discrete measures by enforcing the Brenier potential $\varphi_i$ to be
strongly convex and smooth. Under the same type of assumptions, \citet{Hutter_Rigollet_minimax_oT_map21}
derived minimax estimation rates for the Brenier potential.
The goal of the
present work is to study the sample complexity of Wasserstein barycenters.

For instance, suppose that $\Omega \subset \mathbb{R}^d$ is convex and compact, and that $\mu_i$ and $\bar\mu_m$ satisfy
\[
 L \le \bar\mu_m(x) \le U 
\text{ and }
\mu_i, \bar\mu_m \in C^{1,\tau}(\Omega)
\]
for some constant $\tau \in (0,1)$. Let $\varphi_i$ be the Brenier potential such that
$
(\nabla \varphi_i)\sharp \bar\mu_m = \mu_i .
$
By the boundary regularity theory for the Monge--Amp\`ere equation \citep{caffarelli1996boundary}, the potential $\varphi_i$ is strongly convex on $\Omega$ and belongs to $C^{3,\tau}(\Omega)$. Consequently, one could conclude that there exist constants $\alpha,\beta>0$
such that each $\varphi_i$ is $\alpha$-strongly convex and $\beta$-smooth on $\Omega$. 

Many commonly used input measures satisfy the above boundedness and smoothness conditions.
For example, truncated Gaussian densities or densities obtained from kernel density
estimators are naturally bounded above and below by positive constants on a compact domain
$\Omega$, provided the kernel and bandwidth are chosen appropriately. If, in addition, the Wasserstein barycenter $\bar\mu_m$ is also bounded below and above on
$\Omega$ and belongs to $C^{1,\tau}(\Omega)$, then the regularity assumptions are satisfied, and the associated Brenier potentials transporting $\bar\mu_m$ to each
$\mu_i$ are strongly convex and smooth on $\Omega$.

For a concrete example satisfying Assumptions
\ref{assumption_upper_lower_bounded_density}-\ref{assumption_Wasserstein_potential}, we
let $\mathcal{M}^{+}_{d\times d}$ denote the set of $d \times d$ positive definite matrices,  $\xi_0$ be a spherically symmetric $d$-dimensional random vector with law $P_0$ satisfying Assumption \ref{assumption_upper_lower_bounded_density}-\ref{assumption_smooth_besov_density},
assumed to be absolutely continuous with finite second moment, and consider the family of probability distributions obtained from the reference law $P_0$
via positive definite affine transformations,
\[
\mathcal{F}(P_0)
:=
\bigl\{
\mathcal{L}(A \xi_0 + m)
:\;
A \in \mathcal{M}^{+}_{d\times d},\;
m \in \mathbb{R}^d
\bigr\}.
\]
By Theorem 3.10 in \cite{AlvarezEsteban_Location_Scatter_Family_Bernoulli18}, if $\mu_i \in \mathcal{F}(\mathrm{P}_0)$ for all $i \in [m]$, then the Wasserstein barycenter $\bar{\mu}_m$ also belongs to $\mathcal{F}(\mathrm{P}_0)$. Moreover, since both the barycenter $\bar{\mu}_m$ and the marginal distributions $\mu_i$, $i \in [m]$, are elliptically symmetric, the optimal transport map between them is affine, as shown in Section~3 of \cite{Cuturi_Ellipsoid_18}.

\subsection{Main results}
This section presents our main results on the statistical sample complexity of estimating the Wasserstein barycenter $\bar{\mu}_m$ and related quantities under the following setting. The measures $\mu_1,\dots,\mu_m$ are treated as fixed, and for each $j\in[m]$ we observe an i.i.d.\ sample $X_{j1},\dots,X_{jn}$ drawn from $\mu_j$. All observations are independent across both $j$ and sample indices. 

We begin by establishing two auxiliary results concerning the smoothed empirical distributions and the associated Kantorovich potentials $\vf^*$. The first result concerns the existence of a density estimator with optimal convergence rates under both the Wasserstein distance and the homogeneous negative Sobolev norm. This lemma formalizes the fact that  over smooth density classes with uniform upper and lower bounds, minimax-optimal estimation in $W_2$ automatically yields optimal rates in the $\dot{\bH}^{-1}$ norm. The equivalence between these two metrics under boundedness assumptions allows us to treat them interchangeably up to universal constants, which will be convenient in later arguments.

\begin{lemma}[Smoothed empirical marginal distribution]\label{Lemma:jonathan_smooth_dentisy_estimation}
For any $s \ge 0$, there exists an estimator $\tilde{\mu} \in L^2(\Omega)$ of the density $\mu$, constructed from an i.i.d.\ sample $X_1,\dots,X_n \sim \mu$, such that $L \le \tilde{\mu} \le U$ almost everywhere and, for any $\ell>0$,

\[
\sup_{\mu \in \bH^{s}(\ell;L)}
\bE \|\mu - \tilde \mu\|^2_{\dot{\bH}^{-1}} \asymp
\sup_{\mu \in \bH^{s}(\ell;L)}
\bE W^2_2(\mu,\tilde \mu)
\;\lesssim\;
\begin{cases}
n^{-\frac{2+2s}{d+2s}}, & d \ge 3, \\[0.6em]
n^{-1}(\log n)^2,       & d = 2, \\[0.4em]
n^{-1},             & d = 1 .
\end{cases}
\] 
\end{lemma}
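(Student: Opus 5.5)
The plan follows \citet{Niles_smooth_density_minimax22}: build a wavelet projection estimator, project it onto densities bounded between $L$ and $U$, and control the $\dot{\bH}^{-1}$ error through the dual Besov norm $B^{-1}_{2,2}$. The equivalence with $W_2$ then comes from the standard two-sided comparison for densities with two-sided bounds.

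\textbf{Construction.} Fix a boundary-adapted wavelet basis on $\Omega$ of regularity exceeding $s$, with scaling functions at level $j_0$ and wavelets $\psi_{jk}$. Set
\[
\hat\mu = \sum_k \hat\alpha_{j_0k}\phi_{j_0k}+\sum_{j_0\le j< J}\sum_k \hat\beta_{jk}\psi_{jk},
\qquad
\hat\beta_{jk}=n^{-1}\sum_{i=1}^n\psi_{jk}(X_i),
\]
and similarly for $\hat\alpha_{j_0k}$. Define $\tilde\mu$ as the $L^2$ projection of $\hat\mu$ onto the closed convex set $\{f:\ L\le f\le U,\ \int f=1\}$, which is nonempty whenever $L\le|\Omega|^{-1}\le U$.

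\textbf{Error decomposition.} Write
\[
\|\hat\mu-\mu\|_{\dot{\bH}^{-1}}^2 \lesssim \|\hat\mu-\mu\|_{B^{-1}_{2,2}}^2 \asymp \sum_j 2^{-2j}\sum_k(\hat\beta_{jk}-\beta_{jk})^2.
\]
The first inequality holds because $\int(\hat\mu-\mu)=0$. The variance part is
\[
\sum_{j<J}2^{-2j}\cdot 2^{jd}/n,
\]
which is $\asymp 2^{J(d-2)}/n$ for $d\ge3$, $J/n$ for $d=2$, and $1/n$ for $d=1$. The bias part is
\[
\sum_{j\ge J}2^{-2j}\sum_k\beta_{jk}^2 \le 2^{-2J(1+s)}\ell^2.
\]
Balancing with $2^J\asymp n^{1/(d+2s)}$ gives $n^{-(2+2s)/(d+2s)}$ when $d\ge3$. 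When $d\le2$, the choice $2^J\asymp n$ (or any $J\asymp \log n$) makes the bias negligible and gives $n^{-1}\log n$ for $d=2$. This is within the stated $(\log n)^2$ bound, which also absorbs the extra logarithm from the $W_2$ comparison below.

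\textbf{Projection step (main obstacle).} The projection has to preserve the negative-norm rate. First, $\|\tilde\mu-\mu\|_{\dot{\bH}^{-1}}$ is not directly controlled by an $L^2$ projection, so I would argue on a high-probability event instead. For $s>d/2$, or more generally with $J$ chosen as above, $\|\hat\mu-\mu\|_\infty\to0$ with overwhelming probability: the fluctuations are bounded by $\sqrt{2^{Jd}J/n}$ and the bias by $2^{-J(s-d/2)}$. On this event, working with slightly relaxed constants $L/2$ and $2U$ in place of $L$ and $U$, $\hat\mu$ already satisfies the bounds, so no truncation is needed and $\tilde\mu=\hat\mu$. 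The paper's constants $L,U$ are generic, so this relaxation is harmless. On the complement event, both $\dot{\bH}^{-1}$ and $W_2$ are bounded by constants depending only on $\Omega,L,U$, which contributes a term of order $n^{-10}$. For small $s$ where the sup-norm convergence fails, I would instead follow Niles-Weed--Berthet: mix with the uniform density, $\tilde\mu=(1-\epsilon)\hat\mu_+ + \epsilon\,\mathrm{Unif}$ after normalization. Verifying that this keeps the $B^{-1}_{2,2}$ rate is the delicate point, and I would cite their Proposition directly.

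\textbf{Equivalence with $W_2$.} If both densities are bounded below by $L$, Peyre's inequality gives $W_2(\mu,\tilde\mu)\le 2L^{-1/2}\|\mu-\tilde\mu\|_{\dot{\bH}^{-1}}$. If both are bounded above by $U$, the reverse inequality holds: Loeper's argument along the displacement interpolation gives $\|\mu-\tilde\mu\|_{\dot{\bH}^{-1}}\le \sqrt{U}\,W_2(\mu,\tilde\mu)$, using that the interpolants stay bounded by $U$ on the convex set $\Omega$. Squaring, taking expectations and suprema over $\bH^s(\ell;L)$ yields the claimed $\asymp$ and the rates.
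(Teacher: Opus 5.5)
There is a genuine gap in your projection step, which you correctly flag as the main obstacle. Your variance--bias computation for the raw wavelet estimator $\hat\mu$ in $B^{-1}_{2,2}$ is fine, and so is the Peyre/Loeper comparison.

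\textbf{Why your two routes do not close the case.}
\begin{itemize}
\item The sup-norm route works only when $s>d/2$. Otherwise the sup-norm bias $2^{-J(s-d/2)}$ does not vanish. For $d\le 2$ with $2^J\asymp n$, the fluctuation term $\sqrt{2^{Jd}J/n}$ even diverges, so ``or more generally with $J$ chosen as above'' is not right.
\item Even for $s>d/2$, this route gives only $L/2\le\tilde\mu\le 2U$, not the stated $L\le\tilde\mu\le U$.
\item For $s\le d/2$ your fallback fails. The mixture $(1-\epsilon)\hat\mu_+/\!\int\hat\mu_+ +\epsilon\,\mathrm{Unif}$ is guaranteed to be $\ge L$ only if $\epsilon\ge L|\Omega|$, a fixed constant. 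Then $\tilde\mu-\mu$ contains the non-vanishing term $\epsilon(\mathrm{Unif}-\mu)$.
\item The mixture also enforces no upper bound $U$. Moreover, the map $\hat\mu\mapsto\hat\mu_+/\!\int\hat\mu_+$ is not controlled in $\dot{\bH}^{-1}$ without the sup-norm control you no longer have.
\end{itemize}
So citing Niles-Weed--Berthet for this particular construction does not close the case.

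\textbf{The missing idea.} Project in the error norm rather than in $L^2$. This is the projection onto $\cD'$ that the paper takes from Lemma 8 of Niles-Weed--Berthet. Let $\tilde\mu$ minimize $\nu\mapsto\|\nu-\hat\mu\|_{\dot{\bH}^{-1}}$ over $\cD'=\{\nu\in L^2(\Omega):\ L\le\nu\le U,\ \int\nu=1\}$. A minimizer exists: $\cD'$ is convex, closed and bounded in $L^2$, hence weakly compact, and the objective is convex and $L^2$-continuous by the Poincar\'e inequality. Because the true $\mu$ lies in $\cD'$, the triangle inequality gives, deterministically,
\[
\|\tilde\mu-\mu\|_{\dot{\bH}^{-1}}\le\|\tilde\mu-\hat\mu\|_{\dot{\bH}^{-1}}+\|\hat\mu-\mu\|_{\dot{\bH}^{-1}}\le 2\,\|\hat\mu-\mu\|_{\dot{\bH}^{-1}} .
\]
This holds for every $s\ge 0$, with the exact constants $L,U$, and needs no high-probability event or $L^\infty$ analysis.

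Your rate computation for $\hat\mu$ then transfers directly. Peyre's two-sided comparison applies because both $\mu$ and $\tilde\mu$ take values in $[L,U]$. That comparison costs only constants, so there is no extra logarithm to absorb in $d=2$. Your $n^{-1}\log n$ is simply better than the stated bound.
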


The second result establishes boundedness properties of Kantorovich potentials associated with the Wasserstein barycenter. 
\begin{proposition}[Bounded dual potentials]\label{prop_bounded_dual_potentials}
Under Assumption \ref{assumption_Wasserstein_potential}, there exists universal constants $C_1$ depending on the diameter of $\Omega$ and $C_2$ depending on the diameter of $\Omega$ and $\alpha$, such that for $\varphi_{*,i}(\cdot) = \frac{\|\cdot\|^2}{2} - f_{*,i}(\cdot)$, $i\in\{1,\dots,m-1,\text{mix} \}$,
\begin{align*}
           \|f_{*,i}\|\leq C_1 \quad \text{and} \quad \|\nabla \varphi^*_{*,i}\|\leq C_2.
\end{align*}
\end{proposition}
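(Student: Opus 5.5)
The plan is to bound $f_{*,i}$ in sup-norm using only the normalization $\sup_{x\in\Omega} f_{*,i}(x)=0$ and a Lipschitz bound for $f_{*,i}$ on $\Omega$. Then we bound $\nabla\varphi^*_{*,i}$ by noting that it maps into $\Omega$, or more precisely into a controlled neighborhood of $\Omega$. Throughout, let $D:=\mathrm{diam}(\Omega)$ and use that $\Omega$ is compact and convex.

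\textbf{Step 1: Lipschitz control of $f_{*,i}$.} Since $\vf_*\in\bF^m_{\alpha,\beta}$ (Assumption \ref{assumption_Wasserstein_potential}), each $\varphi_{*,i}=\frac{\|\cdot\|^2}{2}-f_{*,i}$ is convex. The key structural fact is the pushforward relation \eqref{primal_dual_relation}: $\nabla\varphi_{*,i}$ (equivalently $T_{f^c_{*,i}}$ read through the Brenier correspondence) transports between $\bar\mu_m$ and $\mu_i$, both supported in $\Omega$. Hence $\nabla\varphi_{*,i}(x)\in\Omega$ for $\bar\mu_m$-a.e.\ $x$. By continuity of $\nabla\varphi_{*,i}$ (recall $f_{*,i}\in C^{1,1}$) and the support of $\bar\mu_m$, this extends to all of $\Omega$, since $\bar\mu_m=T\sharp\mu_i$ has full support under the density bounds. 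Consequently
\[
\|\nabla f_{*,i}(x)\|=\|x-\nabla\varphi_{*,i}(x)\|\le D .
\]
If one prefers to avoid the support argument, use $f_{*,i}=f_{*,i}^{cc}$, the $c$-concavity of optimal potentials. Then $f_{*,i}(x)=\inf_{y\in\Omega}\{\|x-y\|^2/2-g(y)\}$ is an infimum of functions that are $D$-Lipschitz on $\Omega$, so it is itself $D$-Lipschitz.

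\textbf{Step 2: sup-norm bound.} Combining $\sup_\Omega f_{*,i}=0$ (attained, by compactness) with the Lipschitz bound gives $-D^2\le f_{*,i}\le 0$, so $C_1=D^2$ works for $i\in[m-1]$. For $f_{*,\text{mix}}=-\sum_j(\omega_j/\omega_m)f_{*,j}$ no such normalization is available. Here I would instead use that $f_{*,\text{mix}}$ is also a $c$-concave Kantorovich potential with $\nabla\varphi_{*,\text{mix}}\in\Omega$, which gives the Lipschitz bound $D$ again. The oscillation is then at most $D^2$. The constant level is pinned down by the first-order optimality $\sum_i\omega_i\nabla f_{*,i}=0$ and the normalizations; where that fails, one may renormalize $f_{*,\text{mix}}$ by its supremum, which changes nothing in \eqref{primal_dual_relation}. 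I expect this bookkeeping for the mix component to be the main obstacle. The fallback is to state the bound modulo an additive constant, which is harmless for all later uses, since only gradients enter.

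\textbf{Step 3: gradient of the conjugate.} By Fenchel duality, $\nabla\varphi^*_{*,i}=(\nabla\varphi_{*,i})^{-1}$ on the image $\nabla\varphi_{*,i}(\Omega)$. Since $\varphi^*(y)=\sup_{x\in\Omega}\{\langle x,y\rangle-\varphi(x)\}$ with $\Omega$ compact, every subgradient of $\varphi^*$ is a maximizer lying in $\Omega$. Thus $\|\nabla\varphi^*_{*,i}(y)\|\le\sup_{x\in\Omega}\|x\|$, which is bounded by $D$ after translating $\Omega$ to contain the origin. The $\alpha$-dependence in $C_2$ enters if one bounds instead through strong convexity: $\|\nabla\varphi^*(y)-\nabla\varphi^*(y')\|\le\alpha^{-1}\|y-y'\|$, combined with $\nabla\varphi^*(y_0)\in\Omega$ at one point and $y$ ranging over a set of diameter $\lesssim D$. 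This gives $C_2\lesssim D+D/\alpha$.
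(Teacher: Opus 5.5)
Your argument is correct for $i\in[m-1]$ and for the gradient bounds, but it follows a different route from the paper in both halves.

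For $C_1$, the paper sandwiches $0\le f^c_{*,i}\le\|c\|_\infty$ using $f_{*,i}\le 0$. It then applies $f_{*,i}=f^{cc}_{*,i}$ to get $-\|c\|_\infty\le f_{*,i}\le 0$, so $C_1=D^2/2$. You use $f_{*,i}=f^{cc}_{*,i}$ to write $f_{*,i}$ as an infimum of $D$-Lipschitz functions and combine this with the attained supremum $0$, so $C_1=D^2$. Both rest on the same input, $c$-concavity of the optimal potentials. Keep route (b) of your Step 1 and drop route (a). Full support of $\bar\mu_m$ does not follow from the density bounds on the $\mu_i$: $\mathrm{supp}\,\bar\mu_m=\nabla\varphi^*_{*,i}(\Omega)$, and this equals $\Omega$ exactly when $\nabla\varphi_{*,i}(\Omega)\subset\Omega$, which is the fact you are trying to derive.

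For $C_2$, the paper uses the $1/\alpha$-smoothness of $\varphi^*_{*,i}$ (Lemma \ref{Young_transform_curvature_upper_lower_bound}) and the descent-type inequality $\|\nabla h\|^2\le 2L\,\sup(h(x)-h(y))$; this is where $\alpha$ enters. Your Danskin argument is more direct: $\varphi^*$ is a supremum over the compact convex $\Omega$ of functions affine in $y$, so $\nabla\varphi^*(y)\in\Omega$ for every $y\in\bR^d$. This buys several things.
\begin{itemize}
\item The constant is $\alpha$-free and holds on all of $\bR^d$.
\item It needs only convexity of $\varphi_{*,i}$.
\item It covers $i=\text{mix}$ directly, whereas the paper's reduction to $i\in[m-1]$ does not by itself handle $\nabla\varphi^*_{*,\text{mix}}$.
\end{itemize}
Your constant is $\sup_{x\in\Omega}\|x\|$, not $D$. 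Translating $\Omega$ is not free, since $\nabla\varphi^*$ shifts with the data. However, the paper tacitly assumes $0\in\Omega$ as well when it writes $\varphi_{*,i}\le 2\|c\|_\infty$.

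The one genuinely vague spot is the level of $f_{*,\text{mix}}$, and there the difficulty lies in the statement itself. The level is fixed by the defining identity $\omega_m f_{*,\text{mix}}=-\sum_{j<m}\omega_j f_{*,j}$. Your ``first-order optimality'' $\sum_i\omega_i\nabla f_{*,i}=0$ is just the gradient of this identity and adds nothing. With $-D^2\le f_{*,j}\le 0$, the identity gives at once $0\le f_{*,\text{mix}}\le\frac{1-\omega_m}{\omega_m}D^2$, and this is all the paper's reduction actually yields. Renormalizing $f_{*,\text{mix}}$ by its supremum is not legitimate, because it breaks the identity.

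No constant depending on $\mathrm{diam}(\Omega)$ alone can work for the mix component. Take:
\begin{itemize}
\item $d=1$, $\Omega=[0,1]$, $m=3$, and $\bar\mu_3$ uniform;
\item $s(x)=\sin(\pi x)/\pi$, with $\nabla\varphi_{*,1}=\mathrm{id}+\tfrac12 s$, $\nabla\varphi_{*,2}=\mathrm{id}-\tfrac12 s$ and $\nabla\varphi_{*,\text{mix}}=\mathrm{id}$;
\item $\omega_1=\omega_2=(1-\omega_3)/2$.
\end{itemize}
Then $\sum_i\omega_i\nabla\varphi_{*,i}=\mathrm{id}$, so $\bar\mu_3$ is the barycenter. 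All assumptions hold with $\alpha=1/2$, $\beta=3/2$, densities in $[2/3,2]$, and measures that do not depend on $\omega_3$. Set $S(x)=(1-\cos\pi x)/\pi^2$. The normalized potentials are $f_{*,1}=-\tfrac12 S$ and $f_{*,2}=\tfrac12\bigl(S-S(1)\bigr)$. Hence $f_{*,\text{mix}}\equiv\frac{1-\omega_3}{2\pi^2\omega_3}$, which tends to $\infty$ as $\omega_3\to 0$.

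So for $i=\text{mix}$ you should state the $\omega_m$-dependent bound above rather than fall back to a bound modulo constants. Only gradient-level bounds are used later in the paper, so nothing downstream is affected.
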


Building on the two auxiliary results above, we establish the nonparametric convergence of $\widetilde{\sE}_n$ to $\sE(\mu_1,\dots,\mu_m)$. The formal statement is given below, while the detailed proof is deferred until after Remark~\ref{rmk:boundvariations}.

\begin{theorem}[Nonparametric rate of barycenter functional estimation]\label{thm_sample_complexity} Under Assumptions \ref{assumption_upper_lower_bounded_density}-\ref{assumption_Wasserstein_potential}, the estimator $\widetilde{\sE}_n$ of the barycenter functional $\sE (\mu_1,\dots,\mu_m)$ satisfies that 
\begin{align*}
    \bE\left[ \widetilde{\sE}_n - \sE (\mu_1,\dots,\mu_m) \right]^2 \lesssim  \sum_{i=1}^{m} m\, \omega^2_i\, r_n,
\end{align*}
where 
\begin{align*}
r_n = 
 \begin{cases}
n^{-\frac{2+2s}{d+2s}}, & d \ge 3, \\[0.6em]
n^{-1}(\log n)^2,       & d = 2, \\[0.4em]
n^{-1},             & d = 1 .
\end{cases}
\end{align*}
Here, $\lesssim$ hides constants depending on $\alpha,\beta,s,d,\Omega,L,U$.
\end{theorem}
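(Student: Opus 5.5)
We sandwich $\widetilde{\sE}_n - \sE$ between two first-order perturbation terms in the measures, each linear in the differences $\tilde\mu_i-\mu_i$. Each term is then controlled through the $W_2$ (equivalently $\dot{\bH}^{-1}$) rate of Lemma~\ref{Lemma:jonathan_smooth_dentisy_estimation}. The key observation is that $\sD_{[\mu_1,\dots,\mu_m]}(\vf)$ is \emph{linear} in the measures for fixed potentials. Hence for any fixed $\vf$,
\begin{equation*}
\widetilde{\sD}(\vf)-\sD_{[\mu_1,\dots,\mu_m]}(\vf)=\sum_{i=1}^{m-1}\omega_i\int f_i^c\,d(\tilde\mu_i-\mu_i)+\omega_m\int f_{\text{mix}}^c\,d(\tilde\mu_m-\mu_m).
\end{equation*}

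\textbf{Lower bound.} Since $\vf_*\in\bF^m_{\alpha,\beta}$ is feasible for the smoothed problem by Assumption~\ref{assumption_Wasserstein_potential}, we have $\widetilde{\sE}_n\ge \widetilde{\sD}(\vf_*)$. This gives $\widetilde{\sE}_n-\sE\ge \Delta(\vf_*)$, where $\Delta(\vf)$ denotes the displayed difference.

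\textbf{Upper bound.} The population problem over continuous potentials upper bounds $\sD_{[\mu]}(\tilde\vf_*)$ by strong duality~\eqref{equation_strong_duality}. Hence $\sE\ge \sD_{[\mu]}(\tilde\vf_*)$, so $\widetilde{\sE}_n-\sE\le \Delta(\tilde\vf_*)$. Together,
\begin{equation*}
|\widetilde{\sE}_n-\sE|\le \max\{|\Delta(\vf_*)|,|\Delta(\tilde\vf_*)|\}.
\end{equation*}
We therefore only need bounds on $|\Delta(\vf)|$ that hold uniformly over $\vf\in\bF^m_{\alpha,\beta}$. Uniformity matters because $\tilde\vf_*$ is data-dependent.

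\textbf{Uniform control of each term.} For $f\in\bF_{\alpha,\beta}$ with $\varphi=\|\cdot\|^2/2-f$, the $c$-transform is $f^c=\|\cdot\|^2/2-\varphi^*$. Since $\varphi$ is $\alpha$-strongly convex and $\beta$-smooth, $\varphi^*$ is $1/\alpha$-smooth. Thus $f^c$ is Lipschitz on $\Omega$, with constant controlled by $\operatorname{diam}(\Omega)$ and a gradient bound for $\nabla\varphi^*$ as in Proposition~\ref{prop_bounded_dual_potentials}. This uses the same reasoning as that proposition, applied to any element of the class after normalization.

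Two estimates are then available. The first is Kantorovich--Rubinstein duality,
\begin{equation*}
\Big|\int f_i^c\,d(\tilde\mu_i-\mu_i)\Big|\le \mathrm{Lip}(f_i^c)\,W_1\le C\,W_2(\tilde\mu_i,\mu_i).
\end{equation*}
The second is the Sobolev route,
\begin{equation*}
\Big|\int f_i^c\,d(\tilde\mu_i-\mu_i)\Big|\le \|f_i^c\|_{\dot{\bH}^1}\,\|\tilde\mu_i-\mu_i\|_{\dot{\bH}^{-1}},
\end{equation*}
where $\|f_i^c\|_{\dot{\bH}^1}\le \mathrm{Lip}\cdot|\Omega|^{1/2}$. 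Subtracting means is harmless since both measures have mass one.

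A subtlety is that $f_{\text{mix}}$ for $\tilde\vf_*$ must also have controlled Lipschitz $c$-transform. This holds because the optimization in Algorithm~\ref{alg:main} is over $\bF^m_{\alpha,\beta}$, which includes the mix component. Hence
\begin{equation*}
|\widetilde{\sE}_n-\sE|\le C\sum_{i=1}^m\omega_i\|\tilde\mu_i-\mu_i\|_{\dot{\bH}^{-1}}.
\end{equation*}

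\textbf{Squaring and expectation.} By Cauchy--Schwarz, $(\sum_{i=1}^m a_i)^2\le m\sum_{i=1}^m a_i^2$. Therefore
\begin{equation*}
\bE[\widetilde{\sE}_n-\sE]^2\le C m\sum_{i=1}^m\omega_i^2\,\bE\|\tilde\mu_i-\mu_i\|^2_{\dot{\bH}^{-1}}.
\end{equation*}
Lemma~\ref{Lemma:jonathan_smooth_dentisy_estimation}, applied under Assumptions~\ref{assumption_upper_lower_bounded_density}--\ref{assumption_smooth_besov_density}, bounds each expectation by $r_n$. This matches the claimed form $\sum_i m\,\omega_i^2 r_n$.

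\textbf{Main obstacle.} The delicate step is the uniform Lipschitz bound for $f^c$ over the feasible class, needed for the data-dependent $\tilde\vf_*$. This requires:
\begin{itemize}
\item confirming that $\varphi^*$ is defined via the supremum over $\Omega$, which matches the $c$-transform restricted to $\Omega$;
\item checking that its gradient stays bounded by a constant depending only on $\Omega$, $\alpha$ and $\beta$.
\end{itemize}
I would verify this by noting that $\nabla\varphi^*$ takes values in $\Omega$, since the argmax lies in $\Omega$. This gives $\|\nabla f^c(x)\|\le \|x\|+\operatorname{diam}$-type bounds directly, independent of the data. Strong concavity (Lemma~\ref{Lemma:Strong_concavity}) is not needed for the functional rate; it would enter only for the minimizer.
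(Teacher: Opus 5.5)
Your proof is correct, and it follows a genuinely different and lighter route than the paper's.

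\textbf{The paper's route.} It splits $\sD(\vf_*)-\widetilde\sD(\tilde\vf_*)$ into a linear perturbation term at $\vf_*$ and an optimization term $\widetilde\sD(\vf_*)-\widetilde\sD(\tilde\vf_*)$. It controls the optimization term with four ingredients:
\begin{itemize}
\item strong concavity of $\widetilde\sD$ on $\bF^{m-1}_{\alpha,\beta}$ (Lemma~\ref{Lemma:Strong_concavity});
\item the Polyak--\L{}ojasiewicz inequality;
\item the first-order condition $T_{f^c_{*,i}}\sharp\mu_i=T_{f^c_{*,\text{mix}}}\sharp\mu_m$;
\item the push-forward stability of Lemma~\ref{push_forward_norm_stability}.
\end{itemize}

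\textbf{Your route.} Your sandwich $\Delta(\vf_*)\le\widetilde\sE_n-\sE\le\Delta(\tilde\vf_*)$ replaces all of this with three facts:
\begin{itemize}
\item weak duality, which is all the upper bound needs: $\sD\le\sE$ follows from $f^c(x)+f(y)\le c(x,y)$ and $\sum_{i<m}\omega_i f_i+\omega_m f_{\text{mix}}=0$;
\item optimality of $\tilde\vf_*$ over the class;
\item feasibility of $\vf_*$, which is Assumption~\ref{assumption_Wasserstein_potential}.
\end{itemize}

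\textbf{The ``main obstacle'' is not one.} The price of your route is controlling the linear term at the data-dependent point $\tilde\vf_*$, and that comes for free. Every $c$-transform $f^c=\inf_{y\in\Omega}\{\tfrac12\|\cdot-y\|^2-f(y)\}$ is an infimum of functions that are $\mathrm{diam}(\Omega)$-Lipschitz on the convex set $\Omega$. Hence $f^c$ is $\mathrm{diam}(\Omega)$-Lipschitz for every $f$. You therefore do not need Proposition~\ref{prop_bounded_dual_potentials}, the $1/\alpha$-smoothness of $\varphi^*$, or membership of the mix component in $\bF_{\alpha,\beta}$.

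\textbf{What each approach buys.} Using $W_1\le W_2$, your route gives the deterministic inequality
$|\widetilde\sE_n-\sE|\le \mathrm{diam}(\Omega)\sum_{i}\omega_i W_2(\tilde\mu_i,\mu_i)$. Its constant has no dependence on $\alpha$, $\beta$, or the curvature parameter $\lambda=L\alpha^{d+1}/\beta^2$. It also does not rely on the $W_2$--$\dot{\bH}^{-1}$ equivalence. The paper's strong-concavity machinery is not needed for the functional itself. It does drive Theorem~\ref{thm_W_1_discrepancy}, where functional suboptimality must be converted into an $\sH_{m-1}$-distance between $\tilde\vf_*$ and $\vf_*$. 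That reuse is why the paper organizes this proof around the same decomposition.
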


The rate $r_n$ captures the interplay between smoothness and dimensionality. For $d \ge 3$, the estimator $\widetilde{\sE}_n$ achieves the nonparametric rate $n^{-(2+2s)/(d+2s)}$, which improves as the smoothness parameter $s$ increases. As $s \to \infty$, the exponent $\frac{2+2s}{d+2s}$ converges to $1$, so that the convergence rate approaches the parametric rate $n^{-1}$, effectively eliminating the curse of dimensionality. Our main result establishes nonparametric convergence guarantees for the Wasserstein barycenter $\bar{\mu}_m$, as stated below.

\begin{theorem}[Smoothness-aware barycenter estimation]\label{thm_W_1_discrepancy}
Under Assumptions \ref{assumption_upper_lower_bounded_density}-\ref{assumption_Wasserstein_potential}, the smoothed empirical barycenter $\tilde{\mu}_m^{(n)}$ and Wasserstein barycenter $\bar{\mu}_m$ admit the bound
\begin{align*}
    \bE W_1\Bigl( \tilde{\mu}_m^{(n)},\bar{\mu}_m \Bigr) \lesssim  
    (\sum_{i=1}^{m} m\, \omega^2_i)^{1/4}\, e_n.
\end{align*}
where
\begin{align*}
e_n = 
\begin{cases}
n^{-\frac{1+s}{2d+4s}}, & d \ge 3, \\[0.6em]
n^{-1/4}(\log n)^{1/2},       & d = 2, \\[0.4em]
n^{-1/4},             & d = 1 .
\end{cases} 
\end{align*}
Here, $\lesssim$ hides constants depending on $\alpha,\beta,s,d,\Omega,L,U$.
\end{theorem}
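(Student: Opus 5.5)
We bound $W_1(\tilde\mu_m^{(n)},\bar\mu_m)$ by splitting it through the triangle inequality, using the common index $i$ from the pushforward representations:
\[
W_1\bigl(T_{\tilde f^c_{*,i}}\sharp\tilde\mu_i,\,T_{f^c_{*,i}}\sharp\mu_i\bigr)\le W_1\bigl(T_{\tilde f^c_{*,i}}\sharp\tilde\mu_i,\,T_{\tilde f^c_{*,i}}\sharp\mu_i\bigr)+W_1\bigl(T_{\tilde f^c_{*,i}}\sharp\mu_i,\,T_{f^c_{*,i}}\sharp\mu_i\bigr).
\]
Here $\bar\mu_m=T_{f^c_{*,i}}\sharp\mu_i$ is taken from \eqref{primal_dual_relation}.

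\emph{First term.} Since $\tilde f_{*,i}\in\bF_{\alpha,\beta}$, the map $T_{\tilde f^c_{*,i}}=\nabla(\tfrac{\|\cdot\|^2}{2}-\tilde f_{*,i})^*$ is $1/\alpha$-Lipschitz. Hence the first term is at most $\alpha^{-1}W_1(\tilde\mu_i,\mu_i)\le\alpha^{-1}W_2(\tilde\mu_i,\mu_i)$. By Lemma~\ref{Lemma:jonathan_smooth_dentisy_estimation} and Jensen, its expectation is $\lesssim r_n^{1/2}$, which is dominated by $e_n\asymp r_n^{1/4}$.

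\emph{Second term.} We bound it by the map discrepancy:
\[
W_1\bigl(T_{\tilde f^c_{*,i}}\sharp\mu_i,\,T_{f^c_{*,i}}\sharp\mu_i\bigr)\le\int\|\nabla\tilde\varphi^*_{*,i}-\nabla\varphi^*_{*,i}\|\,d\mu_i\le U^{1/2}|\Omega|^{1/2}\,\|\nabla\tilde\varphi^*_{*,i}-\nabla\varphi^*_{*,i}\|_{L^2(dx)}.
\]
We then transfer this conjugate-gradient discrepancy to the potentials themselves. The standard stability of Legendre transforms for $\alpha$-strongly convex, $\beta$-smooth functions gives
\[
\|\nabla\tilde\varphi^*-\nabla\varphi^*\|_{L^2}\lesssim\|\nabla\tilde\varphi-\nabla\varphi\|_{L^2}=\|\tilde f_{*,i}-f_{*,i}\|_{\dot\bH^1}.
\]
This follows by the change of variables $y=\nabla\varphi^*(x)$, whose Jacobian lies in $[\alpha^d,\beta^d]$, together with the $1/\alpha$-Lipschitzness of $\nabla\tilde\varphi^*$. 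It remains to control $\|\tilde\vf_*-\vf_*\|_{\sH_{m-1}}$; the $\text{mix}$ component is a weighted sum and needs no separate treatment.

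\emph{Controlling the potentials.} This is the main step, and we use Lemma~\ref{Lemma:Strong_concavity} twice. Apply it to $\sD=\sD_{[\mu_1,\dots,\mu_m]}$ at $\vf=\vf_*$, $\vg=\tilde\vf_*$. Since $\nabla\sD(\vf_*)=0$ by \eqref{primal_dual_relation}, this gives
\[
\tfrac{\lambda}{2}\|\tilde\vf_*-\vf_*\|^2_{\sH_{m-1}}\le\sD(\vf_*)-\sD(\tilde\vf_*).
\]
We write the right-hand side as
\[
\bigl[\sD(\vf_*)-\widetilde\sD(\vf_*)\bigr]+\bigl[\widetilde\sD(\vf_*)-\widetilde\sD(\tilde\vf_*)\bigr]+\bigl[\widetilde\sD(\tilde\vf_*)-\sD(\tilde\vf_*)\bigr].
\]
The middle bracket is $\le0$ by optimality of $\tilde\vf_*$, since $\vf_*\in\bF^m_{\alpha,\beta}$ by Assumption~\ref{assumption_Wasserstein_potential}. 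Each remaining bracket has the form $\sum_i\omega_i\int h_i^c\,d(\mu_i-\tilde\mu_i)$ with $h_i^c$ Lipschitz with constant depending on $\Omega$ (Proposition~\ref{prop_bounded_dual_potentials}). Kantorovich--Rubinstein duality therefore bounds it by $C\sum_i\omega_iW_1(\mu_i,\tilde\mu_i)$. We prefer the $W_1$ bound here to the $\dot\bH^{-1}$ route because $\nabla h^c$ need not lie in $L^2$ with a controlled norm, whereas the Lipschitz bound is uniform over $\bF_{\alpha,\beta}$. Cauchy--Schwarz then gives
\[
\Bigl(\sum_i\omega_iW_1\Bigr)^2\le m\sum_i\omega_i^2W_2^2.
\]
Taking expectations and using Lemma~\ref{Lemma:jonathan_smooth_dentisy_estimation},
\[
\bE\|\tilde\vf_*-\vf_*\|^2_{\sH_{m-1}}\lesssim\lambda^{-1}\Bigl(\sum_im\,\omega_i^2\,r_n\Bigr)^{1/2}.
\]
This square-root loss is what produces the exponent $1/4$.

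\emph{Combining.} By Jensen,
\[
\bE\|\tilde f_{*,i}-f_{*,i}\|_{\dot\bH^1}\lesssim\omega_i^{-1/2}\Bigl(\bE\|\tilde\vf_*-\vf_*\|^2_{\sH_{m-1}}\Bigr)^{1/2}\lesssim\Bigl(\sum_i m\,\omega_i^2\Bigr)^{1/4}r_n^{1/4}.
\]
The factor $\omega_i^{-1/2}$ is absorbed into the constant. Since $r_n^{1/4}=e_n$, this matches the stated rate.

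\emph{Obstacle.} Two points need care. The first is justifying the application of Lemma~\ref{Lemma:Strong_concavity}, which requires convexity of $\tfrac{\|\cdot\|^2}{2}-\tilde f_{*,\text{mix}}$; this holds because Algorithm~\ref{alg:main} maximizes over $\bF^m_{\alpha,\beta}$ including the mix component. The second is the Legendre-transform stability step: if the gradient images do not cover $\Omega$, the change of variables must be localized, and we would handle this by restricting to $\nabla\varphi(\Omega)\cap\nabla\tilde\varphi(\Omega)$.
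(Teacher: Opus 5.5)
Your argument is correct and follows the paper's skeleton: strong concavity of $\sD$ at $\vf_*$ using $\nabla\sD(\vf_*)=0$, the same three-term split of $\sD(\vf_*)-\sD(\tilde{\vf}_*)$, and the same $W_1$ triangle inequality. In that inequality the first piece is handled by Lipschitzness of $T_{\tilde f^c_{*,i}}$ and the second by the change of variables behind Lemma~\ref{change_of_variable_middle_step}.

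\textbf{Middle term.} This is where you genuinely depart, in $\widetilde{\sD}(\vf_*)-\widetilde{\sD}(\tilde{\vf}_*)$. The paper reuses its two-sided bound from term (II) of Theorem~\ref{thm_sample_complexity}. That bound needs the Polyak--\L{}ojasiewicz inequality (Lemma~\ref{PLineq}), a uniform bound on $\|\nabla\widetilde{\sD}(\vf_*)\|_{\sH_{m-1}'}$, and the pushforward stability Lemma~\ref{push_forward_norm_stability}. You observe that only an upper bound is needed here and that the term is nonpositive. Indeed, $\tilde{\vf}_*$ maximizes $\widetilde{\sD}$ over $\bF^m_{\alpha,\beta}$, which contains $\vf_*$ by Assumption~\ref{assumption_Wasserstein_potential}. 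This removes all of that machinery from the proof of this theorem. It is still needed for Theorem~\ref{thm_sample_complexity}, where the sign works against you.

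\textbf{Outer brackets.} You use Kantorovich--Rubinstein with $W_1(\mu_i,\tilde\mu_i)$, while the paper uses $\dot{\bH}^1$--$\dot{\bH}^{-1}$ duality. By Lemma~\ref{Lemma:jonathan_smooth_dentisy_estimation} both give the same rate. Your stated reason for the choice is wrong, though. For any $h$, $h^c(y)=\inf_{x\in\Omega}\{\tfrac12\|x-y\|^2-h(x)\}$ is an infimum of $\mathrm{diam}(\Omega)$-Lipschitz functions on $\Omega$. Hence $\|h^c\|_{\dot{\bH}^1}\le\mathrm{diam}(\Omega)|\Omega|^{1/2}$ uniformly, which is exactly what makes the paper's route work. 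This same universal bound is what justifies your Lipschitz constant for $\tilde f^c_{*,i}$. Proposition~\ref{prop_bounded_dual_potentials} does not, since it concerns only $\vf_*$.

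\textbf{Weight factor.} The factor $\omega_i^{-1/2}$ that you absorb is also silently absorbed in the paper's step $\int|T_{\tilde f^c_{*,i}}-T_{f^c_{*,i}}|\,dx\lesssim\|\tilde{\vf}_*-\vf_*\|_{\sH_{m-1}}$. So it is not a discrepancy with the paper, although strictly it makes the hidden constant depend on the weights.
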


\begin{remark}
We note that the preceding nonparametric bound in Theorem \ref{thm_W_1_discrepancy} may not be optimal. Nevertheless, to our knowledge it is the first result that explicitly incorporates the smoothness $s$ of the marginals into the barycenter estimation rate, indicating how the smoothness of the marginals could help improve the sample complexity. In contrast to the empirical barycenter with exponentially poor dimension dependence \citep{Carlier_barycenter_24PTRF}, our smoothed empirical barycenter significantly alleviates the  curse-of-dimensionality. 
\end{remark}

\begin{remark} \label{rmk:boundvariations}
For uniform barycentric weights $\omega_i = 1/m$, the bound in Theorem \ref{thm_sample_complexity} becomes
    \begin{align*}
    \bE\left[ \widetilde{\sE}_n - \sE (\mu_1,\dots,\mu_m) \right]^2 \lesssim  
\begin{cases}
n^{-\frac{2+2s}{d+2s}}, & d \ge 3, \\[0.6em]
n^{-1}(\log n)^2,       & d = 2, \\[0.4em]
n^{-1},             & d = 1 .
\end{cases} 
\end{align*}
Also, Theorem \ref{thm_W_1_discrepancy} reduces to 

\begin{align*}
    \bE W_1\left( \tilde{\mu}_m^{(n)},\bar{\mu}_m \right) \lesssim  
\begin{cases}
n^{-\frac{1+s}{2d+4s}}, & d \ge 3, \\[0.6em]
n^{-1/4}(\log n)^{1/2},       & d = 2, \\[0.4em]
n^{-1/4},             & d = 1 .
\end{cases} 
\end{align*}

\end{remark}

\begin{proof} [Proof of Theorem \ref{thm_sample_complexity}.]
The proof relies on two key ingredients, corresponding to the two terms in the decomposition (\ref{decomposition}) introduced below. We bound term (I) by utilizing the smoothed estimator $\tilde{\mu}_j$. For term (II), we exploit the strong concavity property established in Lemma \ref{Lemma:Strong_concavity}.

In this proof, we write $\sD_{[\mu_1,\dots,\mu_m]}(\cdot)$ as $\sD(\cdot)$ and $\sD_{[\tilde{\mu}_1,\dots,\tilde{\mu}_m]} (\cdot)$
as $\widetilde{\sD}(\cdot)$ to simplify the notation. With a slight abuse of notation, we denote $\sD  (f_{*,1}, \dots,f_{*,m-1})$ as $\sD  (\vf_*)$ for brevity. By the strong duality of $\sE (\mu_1,\dots,\mu_m)$, we have
\[
\bE\left[\sE(\mu_1,\dots,\mu_m) - \widetilde{\sE}_n \right]^2
=
\bE\left[  \sD  (\vf_*) - \widetilde{\sD}  (\tilde{\vf}_*)\right]^2.
\]
Decomposing
$  \sD  (\vf_*) - \widetilde{  \sD}  ( \tilde{\vf}_*)
=  \sD  (\vf_*) - \widetilde{  \sD}  (\vf_*)+ \widetilde{  \sD}  (\vf_*) - \widetilde{  \sD}  (\tilde{\vf}_*),$
we may bound 
\begin{equation}\label{decomposition}
\left|  \sD  (\vf_*) - \widetilde{  \sD}  (\tilde{\vf}_*)\right|^2
\leq 2\left|  \sD  (\vf_*) -  \widetilde{  \sD}  (\vf_*)\right|^2
+ 2\left|\widetilde{  \sD}  (\vf_*) - \widetilde{  \sD}  (\tilde{\vf}_*)\right|^2=:(\text{I})+(\text{II}).
\end{equation}

\noindent\underline{\textbf{Term (I)}.} 
\begin{align*}
    \sD  (\vf_*) -  \widetilde{  \sD}  (\vf_*) 
    =  \sum_{i=1}^{m-1} \omega_i \int f_{*,i}^c \,d (\mu_i - \tilde{\mu}_i) + \omega_m \int f^c_{*,\text{mix}} \,d (\mu_m - \tilde{\mu}_m).
\end{align*}
Since $\vf_*\in\bF^m_{\alpha,\beta}$, the Cauchy-Schwarz inequality and Proposition \ref{prop_bounded_dual_potentials} give that
\begin{align}\label{bias_expectation_bound}
    \bE \left(\sD  (\vf_*) -  \widetilde{  \sD}  (\vf_*) \right)^2 & \leq \left( \sum_{j=1}^{m} \omega^2_j \right) 
    \bE \left( \sum_{i=1}^{m-1} \Bigl[ \int f_{*,i}^c \,d (\mu_i - \tilde{\mu}_i) \Bigr]^2 + \Bigl[ \int f^c_{*,\text{mix}} \,d (\mu_m - \tilde{\mu}_m) \Bigr]^2 \right) \nonumber \\
    & \leq  \left( \sum_{j=1}^{m} \omega^2_j \right) 
    \left( \sum_{i=1}^{m-1}  \| f_{*,i}^c\|^2_{_{\dot{\bH}^{1}}} \bE  \left\| \tilde{\mu}_i  - \mu_i \right\|_{\dot{\bH}^{-1}}^2
    + \| f^c_{*,\text{mix}} \|^2_{_{\dot{\bH}^{1}}}
    \bE  \left\| \tilde{\mu}_m - \mu_m \right\|_{\dot{\bH}^{-1}}^2 \right) \nonumber\\
    &\lesssim  \left( \sum_{j=1}^{m} \omega^2_j \right)  \sum_{i=1}^{m} 
\bE\|\tilde{\mu}_i - \mu_i\|_{\dot{\bH}^{-1}}^2 . 
\end{align}

\noindent\underline{\textbf{Term (II)}.} 
By Lemma \ref{Lemma:Strong_concavity}, $\widetilde{\sD}(\cdot)$ is strongly concave on $\bF^{m-1}_{\alpha,\beta}$. Due to Proposition \ref{proposition_Norm_dual_objective_gradient} and Proposition \ref{prop_bounded_dual_potentials}, we can check that $\|\nabla  \widetilde{  \sD}(\vf_*)\|_{\sH_{m-1}'}$ is bounded by some universal constant depending on $\alpha$ and $\Omega$. Thus, Polyak-\L{}ojasiewicz inequality as in Lemma \ref{PLineq} implies that 
\begin{align}\label{inequality_PL_proof}
    \left[ \widetilde{  \sD}( \tilde{\vf}_*) -  \widetilde{  \sD}(\vf_*)\right]^2\leq \frac{1}{4\lambda^2} \|\nabla  \widetilde{  \sD}(\vf_*)\|_{\sH_{m-1}'}^4\lesssim \|\nabla  \widetilde{  \sD}(\vf_*)\|_{\sH_{m-1}'}^2.
\end{align}
As a consequence, 
\begin{align}\label{variance_expectation_bound}
    &\bE\Bigl[ \widetilde{  \sD}( \tilde{\vf}_*) - \widetilde{  \sD}(\vf_*)\Bigr]^2
   \overset{(\text{i})}{\lesssim}
   \bE \sum_{i=1}^{m-1} \omega_i\|T_{f^c_{*,i}}\sharp\tilde{\mu}_i-T_{f^c_{*,\text{mix}}}\sharp\tilde{\mu}_m\|^2_{\dot{\bH}^{-1}} \nonumber \\
    &\lesssim \sum_{i=1}^{m-1}\omega_i 
    \bE \left( \|T_{f^c_{*,i}}\sharp\tilde{\mu}_i - T_{f^c_{*,i}}\sharp\mu_i\|_{\dot{\bH}^{-1}}^2 + \|T_{f^c_{*,i}}\sharp\mu_i -T_{f^c_{*,\text{mix}}}\sharp\mu_m\|_{\dot{\bH}^{-1}}^2 + \|T_{f^c_{*,\text{mix}}}\sharp\mu_m -T_{f^c_{*,\text{mix}}}\sharp\tilde{\mu}_m\|_{\dot{\bH}^{-1}}^2 \right)\nonumber\\
    &\overset{(\text{ii})}{=} \sum_{i=1}^{m-1} \omega_i 
    \bE \left( \|T_{f^c_{*,i}}\sharp\tilde{\mu}_i - T_{f^c_{*,i}}\sharp\mu_i\|_{\dot{\bH}^{-1}}^2 
    + \|T_{f^c_{*,\text{mix}}}\sharp\mu_m -T_{f^c_{*,\text{mix}}}\sharp\tilde{\mu}_m\|_{\dot{\bH}^{-1}}^2 \right) \nonumber\\
    &\overset{(\text{iii})}{\lesssim} \sum_{i=1}^{m-1} \omega_i 
     \left( \bE\|\tilde{\mu}_i - \mu_i\|_{\dot{\bH}^{-1}}^2 
    + \bE\|\mu_m - \tilde{\mu}_m\|_{\dot{\bH}^{-1}}^2 \right). 
\end{align}
Here, inequality (i) follows from (\ref{inequality_PL_proof}) together with Proposition \ref{proposition_Norm_dual_objective_gradient}, equality (ii) is a direct consequence of (\ref{primal_dual_relation}), and inequality (iii) follows from Proposition \ref{prop_bounded_dual_potentials} and Lemma \ref{push_forward_norm_stability}. Combining (\ref{decomposition}), (\ref{bias_expectation_bound}) and (\ref{variance_expectation_bound}), we derive that

\begin{align*}
    \bE\left[  \sD  (\vf_*) - \widetilde{\sD}  (\tilde{\vf}_*)\right]^2 \lesssim \sum_{i=1}^m \left( \omega_i + ( \sum_{j=1}^{m} \omega^2_j) \right) \bE\| \tilde{\mu}_i - \mu_i \|_{\dot{\bH}^{-1}}^2. 
\end{align*}
Combining this with Lemma \ref{Lemma:jonathan_smooth_dentisy_estimation} completes the proof. 
\end{proof}

\begin{proof}[Proof of Theorem \ref{thm_W_1_discrepancy}.]
The strong concavity (\ref{strong_concavity}) of $\sD(\cdot)$ implies that 
    \begin{align}\label{strong_concavity_plug_in}
        \|\tilde{\vf}_* - \vf_*\|^2_{\sH_{m-1}} \lesssim \sD (\vf_*) -\sD (\tilde{\vf}_*).
    \end{align}
Furthermore, we can decompose the right-hand side above as
\begin{align}\label{decomposition_Df_Dtildef}
    \sD (\vf_*) -\sD (\tilde{\vf}_*) 
    = \left( \sD (\vf_*) - \widetilde{\sD} (\vf_*) \right) 
    + \left( \widetilde{\sD} (\vf_*) -  \widetilde{\sD} (\tilde{\vf}_*) \right)
    + \left(  \widetilde{\sD} (\tilde{\vf}_*) - \sD (\tilde{\vf}_*) \right).
\end{align}
The first two terms are the same as term $(\text{I})$ and respectively term $(\text{II})$ in the proof of Theorem \ref{thm_sample_complexity}. It remains to bound the third term. Note that
\begin{align*}
\left|\widetilde{\sD} (\tilde{\vf}_*) - \sD (\tilde{\vf}_*) \right|
&\leq  \sum_{i=1}^{m-1} \omega_i \| \tilde{f}_{*,i}^c\|_{\dot{\bH}^{1}}   \left\| \tilde{\mu}_i  - \mu_i \right\|_{\dot{\bH}^{-1}}
    + \omega_m \| \tilde{f}^c_{*,\text{mix}} \|_{\dot{\bH}^{1}}
     \left\| \tilde{\mu}_m - \mu_m \right\|_{\dot{\bH}^{-1}} \nonumber\\
     &\lesssim \sum_{i=1}^{m} \omega_i 
     \|\tilde{\mu}_i - \mu_i\|_{\dot{\bH}^{-1}} . \nonumber
\end{align*}
As a consequence of inequality (\ref{strong_concavity_plug_in}), equation (\ref{decomposition_Df_Dtildef}) as well as Lemma \ref{Lemma:jonathan_smooth_dentisy_estimation}, we know that 
\begin{align}\label{diff_potential_H-1}
        \bE \|\tilde{\vf}_* - \vf_*\|^2_{\sH_{m-1}} \lesssim
       (\sum_{i=1}^{m} m\, \omega^2_i)^{1/2}\, r_n^{1/2},
\end{align}
for $r_n$ defined in Theorem \ref{thm_sample_complexity}. The triangle inequality gives that 
\begin{align}\label{W_1_triangle_inequality}
    \bE W_1\left( T_{\tilde{f}^c_{*,i}}\sharp\tilde{\mu}_i,T_{f^c_{*,i}}\sharp\mu_i \right) 
    \leq 
    \bE W_1\left( T_{\tilde{f}^c_{*,i}}\sharp\tilde{\mu}_i,T_{\tilde{f}^c_{*,i}}\sharp\mu_i \right) 
    + 
    \bE W_1\left( T_{\tilde{f}^c_{*,i}}\sharp\mu_i,T_{f^c_{*,i}}\sharp\mu_i \right).
\end{align}
Meanwhile, the Kantorovich–Rubinstein duality of $W_1$ distance \citep{villani2008optimal} gives that
\begin{align}\label{inequality_W_1_bound_first_term}
\bE W_1\left( T_{\tilde{f}^c_{*,i}}\sharp\tilde{\mu}_i,T_{\tilde{f}^c_{*,i}}\sharp\mu_i \right) & = \bE \sup_{g\in \text{Lip}(1)} \int g\circ T_{\tilde{f}^c_{*,i}} \,d (\tilde{\mu}_i - \mu_i)\\
 &\lesssim \bE \sup_{h\in \text{Lip}(1)} \int h \,d (\tilde{\mu}_i - \mu_i)
=\bE  W_1 (\tilde{\mu}_i, \mu_i) \lesssim r_n^{ 1/2}, \nonumber
\end{align}
as well as
\begin{align}\label{inequality_W_1_bound_second_term}
\bE W_1\left( T_{\tilde{f}^c_{*,i}}\sharp\mu_i,T_{f^c_{*,i}}\sharp\mu_i \right) &= \bE \sup_{g\in \text{Lip}(1)} \int (g\circ T_{\tilde{f}^c_{*,i}} - g\circ T_{f^c_{*,i}}) \, d\mu_i\\
&\leq  U\,\bE \int |T_{\tilde{f}^c_{*,i}} - T_{f^c_{*,i}}| \,dx \lesssim  \bE\|\tilde{\vf}_* - \vf_*\|_{\sH_{m-1}} \lesssim (\sum_{i=1}^{m} m\, \omega^2_i)^{1/4}\, r_n^{1/4},\nonumber
\end{align}
where we use Lemma \ref{change_of_variable_middle_step} for the second to last inequality.
Putting together (\ref{W_1_triangle_inequality}), (\ref{inequality_W_1_bound_first_term}) and (\ref{inequality_W_1_bound_second_term}), our proof is complete. 

\end{proof}

\section{Application: sample complexity of two-layer sampling model}\label{Sec_two_staged_sampling}


In many applications of distributional data analysis, the fundamental analytical unit is a probability distribution rather than an individual observation. Representative examples include regression analysis with distributional predictor or response \citep{chen2023wasserstein, 10.1093/jrsssb/qkad051,zhu2024spherical,Alberto_two_staged_sampling25}, and distributional principal component analysis
\citep{bigot2017geodesic, cazelles2018geodesic}. In these settings, each data object is naturally modeled as a probability measure $\mu_j \in \mathcal{P}_2(\mathbb{R}^d)$, and statistical analysis is performed directly at the level of distributions.

In practice, however, the distributions $\mu_1, \dots, \mu_m$ are not directly observed. Instead, for each $j \in [m]$, one only has access to an i.i.d.\ sample $X_{j1}, \dots, X_{jn} \sim \mu_j$. This leads to a two-layer sampling scheme. At the first layer, the latent distributions $\mu_1, \dots, \mu_m$ are independently drawn from a population distribution $\mathbb{P} \in \mathcal{P}_2(\mathcal{P}_2(\mathbb{R}^d))$, but remain unobserved. At the second layer, conditional on each $\mu_j$, an i.i.d.\ sample $\{X_{ji}\}_{i=1}^n$ is observed. The full dataset thus consists only of the second-layer samples $\{X_{ji} : j \in [m],\, i \in [n]\}$.

Our results enable theoretical guarantees for this widely used two-layer sampling framework, explicitly accounting for the randomness arising from both the first-layer sampling of distributions and the second-layer sampling of observations. In particular, under uniform barycentric weights $\omega_j = 1/m$, we could establish the statistical convergence rate of the smoothed empirical barycenter $\tilde{\mu}_m^{(n)}$ toward the population barycenter $\mu^*$ associated with $\bP$. This result follows by combining our second-layer sample complexity analysis with existing guarantees for the first-layer sampling of distributions \citep{Thibaut_Quentin_Philippe_Austin_22fast}, under suitable regularity conditions. To this end, we adopt the following assumption, introduced in \cite{Thibaut_Quentin_Philippe_Austin_22fast}, which ensures a parametric convergence rate for the first-layer sampling.

\begin{assumption}\label{assumption_Thibaut_22_fast}
    Every $\mu\in \text{supp}(\bP)$ is the pushforward of $\mu^*$ by the gradient of an $\kappa$-strongly convex and $\lambda$-smooth function and $\lambda-\kappa<1$.
\end{assumption}

\noindent We note that the above Assumption \ref{assumption_Thibaut_22_fast} used in \cite{Thibaut_Quentin_Philippe_Austin_22fast} is stronger than our Assumption~\ref{assumption_Wasserstein_potential}, which is imposed for the second-layer analysis. In particular, it enforces a specific relationship between the strong convexity and smoothness parameters, whereas in Assumption~\ref{assumption_Wasserstein_potential} the strong convexity parameter may be arbitrarily small and the smoothness parameter coule be arbitrarily large. Under Assumptions~\ref{assumption_upper_lower_bounded_density}–\ref{assumption_Thibaut_22_fast}, we can unify the results for both sampling layers in two-layer sampling model and obtain the following bound.

\begin{theorem}[Two-layer sampling model sample complexity]\label{sample_complexity_two_staged_model} Under Assumptions \ref{assumption_upper_lower_bounded_density}-\ref{assumption_Thibaut_22_fast}, 

\begin{align*}
        \bE W_1(\tilde{\mu}_m^{(n)}, \mu^*) \lesssim
\begin{cases}
 m^{-1/2} + n^{-\frac{1+s}{2d+4s}}, & d \ge 3, \\[0.6em]
 m^{-1/2} + n^{-1/4}(\log n)^{1/2},       & d = 2, \\[0.4em]
 m^{-1/2} + n^{-1/4},             & d = 1 .
\end{cases}
\end{align*}
Here, $\lesssim$ hides constant depending on $\lambda,\kappa,\bP,\alpha,\beta,s,d,\Omega,L,U$.
\end{theorem}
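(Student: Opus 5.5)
The plan is to split the error with the triangle inequality for $W_1$, using the intermediate (unobserved) empirical barycenter $\bar{\mu}_m$ of the latent measures $\mu_1,\dots,\mu_m$ with uniform weights $\omega_j=1/m$:
\[
\bE W_1(\tilde{\mu}_m^{(n)},\mu^*) \le \bE W_1(\tilde{\mu}_m^{(n)},\bar{\mu}_m) + \bE W_1(\bar{\mu}_m,\mu^*).
\]
The second term carries the first-layer error and the first term carries the second-layer error.

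\textbf{First-layer term.} Since $W_1\le W_2$, Jensen's inequality gives $\bE W_1(\bar{\mu}_m,\mu^*)\le (\bE W_2^2(\bar{\mu}_m,\mu^*))^{1/2}$. Under Assumption \ref{assumption_Thibaut_22_fast}, the result of \cite{Thibaut_Quentin_Philippe_Austin_22fast} bounds $\bE W_2^2(\bar{\mu}_m,\mu^*)$ by a multiple of $m^{-1}$. This term is therefore $\lesssim m^{-1/2}$, with constants depending on $\lambda,\kappa,\bP$.

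\textbf{Second-layer term.} We condition on the first-layer draw $(\mu_1,\dots,\mu_m)$. Given these measures, the samples $X_{ji}$ are i.i.d.\ from $\mu_j$ and independent across $j$, which is exactly the setting of Theorem \ref{thm_W_1_discrepancy}. With $\omega_i=1/m$ we have $\sum_i m\,\omega_i^2=1$, so Remark \ref{rmk:boundvariations} gives
\[
\bE\bigl[W_1(\tilde{\mu}_m^{(n)},\bar{\mu}_m)\mid \mu_1,\dots,\mu_m\bigr]\lesssim e_n .
\]
Taking the outer expectation over $\bP$ gives the $n$-dependent part of the claimed rates in each of the cases $d\ge3$, $d=2$, $d=1$.

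\textbf{Main obstacle: uniformity of the conditional bound.} The implied constant in Theorem \ref{thm_W_1_discrepancy} must not depend on the particular realization $(\mu_1,\dots,\mu_m)$. To ensure this, I would interpret Assumptions \ref{assumption_upper_lower_bounded_density}--\ref{assumption_Wasserstein_potential} as holding $\bP$-almost surely with the same constants $L,U,s,\ell,\alpha,\beta$.
\begin{itemize}
\item For Assumption \ref{assumption_Wasserstein_potential}, I would argue that the Kantorovich potentials of the random empirical barycenter lie in $\bF_{\alpha,\beta}^m$ almost surely.
\item If needed, this can be derived from Assumption \ref{assumption_Thibaut_22_fast}: the structure of the support of $\bP$ and the fixed-point characterization of $\bar{\mu}_m$ should place the relevant Brenier maps in a uniformly strongly convex and smooth class.
\end{itemize}
I would then check that the estimate of Lemma \ref{Lemma:jonathan_smooth_dentisy_estimation} is already uniform over $\bH^s(\ell;L)$. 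I would also check that every step in the proofs of Theorems \ref{thm_sample_complexity} and \ref{thm_W_1_discrepancy} uses only these constants, through Lemma \ref{Lemma:Strong_concavity} and Proposition \ref{prop_bounded_dual_potentials}. Once this is in place, the conditional bound holds uniformly, and combining the two terms completes the proof.
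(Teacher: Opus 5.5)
Your proposal is correct and follows the paper's proof. You use the same triangle inequality through the latent barycenter $\bar{\mu}_m$. You handle the first-layer term with Corollary 4.4 of \cite{Thibaut_Quentin_Philippe_Austin_22fast}, adding $W_1\le W_2$ and Jensen, which the paper leaves implicit. You handle the second-layer term with Theorem~\ref{thm_W_1_discrepancy}, using $\sum_i m\,\omega_i^2=1$.

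Your requirement that Assumptions~\ref{assumption_upper_lower_bounded_density}--\ref{assumption_Wasserstein_potential} hold $\bP$-almost surely with uniform constants is exactly the reading the paper tacitly relies on. However, do not rely on your fallback of deriving the uniform $\bF_{\alpha,\beta}$ regularity of the random barycenter's potentials from Assumption~\ref{assumption_Thibaut_22_fast}. That would need Caffarelli-type regularity for $\bar{\mu}_m$, which that assumption does not supply.
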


\begin{remark}
Under milder conditions, the state-of-the-art \cite{Carlier_barycenter_24PTRF} implies that 
\begin{align*}
\bE W_1(\tilde{\mu}_m^{(n)}, \mu^*) \lesssim
\begin{cases}
 m^{-1/2} + n^{-1/6d}, & d \geq 5, \\[0.6em]
 m^{-1/2} + n^{-1/24}(\log n)^{1/12},       & d = 4, \\[0.4em]
 m^{-1/2} + n^{-1/24},             & d \leq 3 .
\end{cases}
\end{align*}
\end{remark}

\begin{proof}[Proof of Theorem \ref{sample_complexity_two_staged_model}.]
Decompose $W_1(\tilde{\mu}_m^{(n)}, \mu^*)\leq W_1(\tilde{\mu}_m^{(n)}, \bar{\mu}_m) + W_1( \bar{\mu}_m, \mu^*)$ and the proof then follows directly from Corollary 4.4 in \cite{Thibaut_Quentin_Philippe_Austin_22fast} and Theorem \ref{thm_W_1_discrepancy}.    
\end{proof}

\acks{X. Chen acknowledges support from NSF DMS-2413404 and an unrestricted gift from the Simons Foundation. C. Zhu acknowledges support from NSF DMS-2412832.}

\bibliography{unreg_barycenter}

\begin{appendix}

\section{Deferred Proofs}

\begin{proof}[Proof of proposition \ref{proposition_Norm_dual_objective_gradient}.]
The Cauchy-Schwarz inequality and definition give that
\begin{align*}
\bigl \langle \nabla \sD_{[\mu_1,\dots,\mu_m]} (f_1, \ldots, f_{m-1}), &\,(\varphi_1, \ldots, \varphi_{m-1}) \bigr\rangle
= \sum_{i=1}^{m-1} \omega_i \int \varphi_i 
\, d \, ( T_{f_{\text{mix}}^c}\sharp \mu_m - T_{f_i^c}\sharp\mu_i )  \\
&\leq \sum_{i=1}^{m-1} \omega_i \|\varphi_i\|_{\dot{\bH}^1}
\|T_{f_i^c}\sharp\mu_i -  T_{f_{\text{mix}}^c}\sharp \mu_m\|_{\dot{\bH}^{-1}}\\
&\leq \left ( \sum_{i=1}^{m-1} \omega_i \|\varphi_i\|^2_{\dot{\bH}^1} \right )^{1/2}  \left( \sum_{i=1}^{m-1} \omega_i  \|T_{f_i^c}\sharp\mu_i -  T_{f_{\text{mix}}^c}\sharp \mu_m\|^2_{\dot{\bH}^{-1}} \right)^{1/2}\\
&=\| (\varphi_1,\dots,\varphi_{m-1}) \|_{\sH_{m-1}} \left( \sum_{i=1}^{m-1} \omega_i  \|T_{f_i^c}\sharp\mu_i -  T_{f_{\text{mix}}^c}\sharp \mu_m\|^2_{\dot{\bH}^{-1}} \right)^{1/2}.
\end{align*}
Here, the equality is attainable for $\varphi_i = \frac{ (-\Delta)^{-1} \left (   T_{f_{\text{mix}}^c}\sharp \mu_m -  T_{f_i^c}\sharp\mu_i\right ) }{( \sum_{j=1}^{m-1} \omega_j \|  
T_{f_j^c}\sharp\mu_j -  T_{f_{\text{mix}}^c}\sharp \mu_m
\|^2_{\dot{\bH}^{-1} } )^{1/2}}$ where $(-\Delta)^{-1}$ denotes the inverse of the negative Laplacian operator with zero Neumann boundary conditions.
\end{proof}

\begin{proof}[Proof of Lemma \ref{Lemma:Strong_concavity}.]
To lighten the notation, we adopt the following convention throughout this proof. Let \(\Omega\subset \bR^d\) be closed and convex. For  a differentiable function \(\xi\colon\Omega\to\mathbb{R}\), the Bregman divergence is defined as $\xi(x_{2} \mid x_{1}) = \xi(x_{2}) - \xi(x_{1}) - \langle \nabla \xi(x_{1}) , x_{2}-x_{1} \rangle$, for all \(x_{1},x_{2}\in\Omega\). The LHS of (\ref{strong_concavity}) expands as 
\begin{align}
&\sum_{i=1}^{m-1} \omega_i \int g_i^c - f_i^c \, d\mu_i + \omega_m \int g^c_{\text{mix}} - f^c_{\text{mix}} \, d\mu_m \nonumber\\
& + \sum_{i=1}^{m-1} \omega_i \int (g_i - f_i) \circ T_{f_i^c} \, d\mu_i - \int \left( \sum_{i=1}^{m-1} \omega_i (g_i - f_i) \right) \circ T_{f^c_{\text{mix}}} \, d\mu_m \nonumber\\
&= \sum_{i=1}^{m-1} \omega_i \int (g_i^c - f_i^c - f_i \circ T_{f_i^c} + g_i \circ T_{f_i^c}) \, d\mu_i \nonumber\\
&+ \int \left( \omega_m g^c_{\text{mix}} - \omega_m f^c_{\text{mix}} + \sum_{i=1}^{m-1} \omega_i f_i \circ T_{f^c_\text{mix}} - \sum_{i=1}^{m-1} \omega_i g_i \circ T_{f^c_\text{mix}} \right) \, d\mu_m \nonumber\\
&:= \text{(I)} + \text{(II)}.
\end{align}

\noindent\underline{\textbf{Term (I)}.}
Denote $\xi_i(x):=\frac{1}{2}\|x\|^2-g_i(x)$ and we have 
\begin{align*}
(\mathrm{I}) &= \sum_{i=1}^{m-1} \omega_i \left[ \int \left( g_i^c + g_i \circ T_{f_i^c} \right) d\mu_i - \int \left( f_i^c + f_i \circ T_{f_i^c} \right) d\mu_i \right] \\
&\stackrel{(\text{i})}{=} \sum_{i=1}^{m-1} \omega_i \int \left(  \frac{1}{2} \| T_{g_i^c}(x)-x \|^2 - \frac{1}{2} \| T_{f_i^c}(x)-x \|^2 
+ g_i( T_{f_i^c}(x)) - g_i (T_{g_i^c}(x)) \right) d\mu_i(x) \\
&\stackrel{(\text{ii})}{=} - \sum_{i=1}^{m-1} \omega_i \int \frac{1}{2} \| T_{f_i^c}(x) - T_{g_i^c}(x) \|^2 - g_i \left( T_{f_i^c}(x) \mid T_{g_i^c}(x) \right) d\mu_i(x) \\
&=-\sum_{i=1}^{m-1} \omega_i \int \xi_i(  T_{f^c_i} (x)|T_{g^c_i} (x)) d \mu_i(x).
\end{align*}
Here, we use Proposition \ref{property_c_transform} for equality $(\text{i})$ and the computation details in equation $(\text{ii})$ are presented as follows.
\begin{align*}
\text{Integrand} & \stackrel{(\text{iii})}{=} \frac{1}{2} \| T_{g_i^c}(x) \|^2 - \frac{1}{2} \| T_{f_i^c}(x) \|^2 + \left\langle T_{f_i^c}(x) - T_{g_i^c}(x), x \right\rangle + g_i ( T_{f_i^c} (x) ) - g_i ( T_{g_i^c} (x) ) \\
&\qquad + \left\langle T_{f_i^c} (x) - T_{g_i^c} (x) ,  - \nabla g_i (T_{g_i^c}(x)) + T_{g_i^c}(x) - x \right\rangle \\
&= -\frac{1}{2} \|T_{f_i^c} (x) - T_{g_i^c} (x) \|^2 + g_i(T_{f_i^c}(x)) - g_i(T_{g_i^c}(x)) \\
&\qquad- \langle T_{f_i^c}(x), \nabla g_i (T_{g_i^c}(x)) \rangle + \langle T_{g_i^c}(x), \nabla g_i (T_{g_i^c}(x)) \rangle \nonumber\\
&= -\frac{1}{2} \left\| T_{f^c_i} (x) - T_{g^c_i} (x) \right\|^2 + g_i(T_{f^c_i}(x))
    -  g_i (T_{g^c_i}(x))  - \langle \nabla g_i (T_{g^c_i} (x)),  T_{f^c_i}(x) - T_{g^c_i}(x)  \rangle\\
&= -\frac{1}{2} \left\| T_{f^c_i} (x) - T_{g^c_i} (x) \right\|^2 
+ g_i \left( T_{f^c_i} (x) \mid T_{g^c_i} (x) \right) = -\xi_i(  T_{f^c_i} (x)|T_{g^c_i} (x)),
\end{align*}
where Proposition \ref{property_c_transform} is invoked for equation $(\text{iii})$. We note that $\xi_i$ is $\alpha$-strongly convex and $\beta$-smooth due to the fact that $g_i\in \bF_{\alpha,\beta}$. Thus, we have
$$\xi_i (  T_{f^c_i} (x)|T_{g^c_i} (x) )\geq \frac{\alpha}{2} \|T_{f^c_i} (x)-T_{g^c_i} (x)\|^2= \frac{\alpha}{2} \|\nabla f^c_i (x)- \nabla g^c_i(x)\|^2.$$ 
It follows that 
$$(\text{I})\leq -\frac{\alpha}{2}\sum_{i=1}^{m-1}\omega_i\int\|\nabla f^c_i (x)- \nabla g^c_i(x)\|^2d\mu_i(x). $$

\noindent\underline{\textbf{Term (II).}}
Denote $\xi_{\text{mix}}(x):=\frac{1}{2}\|x\|^2-g_{\text{mix}}(x)$ with $g_{\text{mix}} = - \sum_{j=1}^{m-1} \frac{\omega_j}{\omega_m} g_j$. Then we have 
\begin{align*}
(\text{II}) &:= 
\omega_m \int g^c_{\text{mix}} - f^c_{\text{mix}} \, d\mu_m
    +\int \sum_{i=1}^{m-1} \omega_i f_i\circ T_{f^c_{\text{mix}}}-  \sum_{i=1}^{m-1} \omega_i g_i\circ  T_{f^c_{\text{mix}}} \, d\mu_m\\
&= \omega_m  \int \left[ g^c_{\text{mix}} - f^c_{\text{mix}} 
    - f_{\text{mix}}\circ T_{f^c_{\text{mix}}} + g_{\text{mix}}\circ T_{f^c_{\text{mix}}}\right] \, d\mu_m \\
& \stackrel{(\text{iv})}{=} -\omega_m \int \xi_{\text{mix}}(  T_{f^c_{\text{mix}}} (x)|T_{g^c_{\text{mix}}} (x)) d\mu_m(x).
\end{align*}
The computation detail in equation $(\text{iv})$ is presented below.
\begin{align*}
    \text{Integrand} &= g^c_{\text{mix}} + g_{\text{mix}}\circ T_{f^c_{\text{mix}}}-(f^c_{\text{mix}} 
    + f_{\text{mix}}\circ T_{f^c_{\text{mix}}} )\\
    & \stackrel{(\text{v})}{=}\frac{1}{2}\|T_{g^c_{\text{mix}}}(x)-x\|^2-g_{\text{mix}}(T_{g^c_{\text{mix}}}(x))+g_{\text{mix}}(T_{f^c_{\text{mix}}}(x))-\frac{1}{2}\|T_{f^c_{\text{mix}}}(x)-x\|^2\\
    & \stackrel{(\text{vi})}{=}\frac{1}{2}\left\|T_{g^c_{\text{mix}}}(x)\right\|^2 - \frac{1}{2}\|T_{f^c_{\text{mix}}}(x)\|^2 + \langle T_{f^c_{\text{mix}}} - T_{g^c_{\text{mix}}} , x \rangle - g_{\text{mix}}(T_{g^c_{\text{mix}}}(x)) + g_{\text{mix}}(T_{f^c_{\text{mix}}}(x))\\
    &\qquad + \langle T_{f_{\text{mix}}^c}(x) - T_{g_{\text{mix}}^c}(x),  - \nabla g_{\text{mix}} (T_{g_{\text{mix}}^c}(x)) + T_{g_{\text{mix}}^c}(x) - x \rangle \\
&= -\frac{1}{2} \| T_{f^c_{\text{mix}}} (x) - T_{g^c_{\text{mix}}} (x) \|^2 
+ g_{\text{mix}} \left( T_{f^c_{\text{mix}}} (x) \mid T_{g^c_{\text{mix}}} (x) \right) = -\xi_{\text{mix}}(  T_{f^c_{\text{mix}}} (x)|T_{g^c_{\text{mix}}} (x)),
\end{align*}
Here, $(\text{v})$ is by Proposition \ref{property_c_transform} and adding $\langle T_{f^c_{\text{mix}}}(x)-T_{g^c_{\text{mix}}}(x) ,  T_{g^c_{\text{mix}}}(x)-x-\nabla g_{\text{mix}}(T_{g^c_{\text{mix}}}(x)) \rangle$ gives equality $(\text{vi})$. As a result of $\frac{\|\cdot\|^2}{2}-g_{\text{mix}}(\cdot)$ being convex,
we know that $\xi_{\text{mix}} (x_1|x_2) \geq 0,$ for any $x_1,x_2\in\Omega$. Thus,
$$(\text{II})\leq  0 .$$
Write $\sD_{[\mu_1,\dots,\mu_m]}(\cdot)$ as $\sD(\cdot)$ for notational simplicity. Combining the analysis above, we have the LHS of (\ref{strong_concavity}) could be rewritten as 
\begin{align*}
    \sD(\vg) & - \sD(\vf) -  \langle \nabla \sD(\vf), \vg - \vf \rangle\\
    &= -\sum_{i=1}^{m-1} \omega_i \int \xi_i(  T_{f^c_i} (x)|T_{g^c_i} (x)) d \mu_i(x) -\omega_m \int \xi_{\text{mix}}(  T_{f^c_{\text{mix}}} (x)|T_{g^c_{\text{mix}}} (x)) d\mu_m(x)\\
    & \leq -\frac{\alpha}{2}\sum_{i=1}^{m-1}\omega_i\int \|\nabla f^c_i (x)- \nabla g^c_i(x)\|^2d\mu_i(x) 
    \stackrel{(\text{vii})}{=} 
    -\frac{\alpha}{2}\sum_{i=1}^{m-1}\omega_i\int \|\nabla \varphi^*_i (x)- \nabla \psi^*_i(x) \|^2 d\mu_i(x) \\ 
    &\stackrel{(\text{viii})}{\leq}
    -\frac{\alpha}{2\beta^2}\sum_{i=1}^{m-1}\omega_i\int \|\nabla \varphi_i (\nabla \varphi_i^*(x))- \nabla \psi_i(\nabla \varphi_i^*(x)) \|^2 d\mu_i(x)\\
    &=
    -\frac{\alpha}{2\beta^2}\sum_{i=1}^{m-1}\omega_i\int \|\nabla f_i (\nabla \varphi_i^*(x))- \nabla g_i(\nabla \varphi_i^*(x)) \|^2 d\mu_i(x)\\
    &\stackrel{(\text{ix})}{\leq} -\frac{L\alpha^{d+1}}{2\beta^2} \sum_{i=1}^{m-1}\omega_i\int \|\nabla f_i (x)- \nabla g_i(x) \|^2 d x. 
\end{align*}
Here, we use Proposition \ref{property_c_transform} for  equality (\text{vii}) with $\varphi_i(\cdot) := \frac{\|\cdot\|^2}{2} - f_i (\cdot)$ and $\psi_i(\cdot) := \frac{\|\cdot\|^2}{2} - g_i (\cdot)$ for $i\in\{1,\dots,m-1\}$ and Lemma \ref{change_of_variable_middle_step} for inequality $(\text{viii})$. Inequality $(\text{ix})$ follows by noting that the density of $(\nabla\varphi^*_i) \sharp \mu_i$ is $\rho_i(\cdot) =  \mu_i(\nabla\varphi_i (\cdot) )|\det \nabla^2 \varphi_i(\cdot)|$, which satisfies that $\rho_i\geq L\alpha^d$.

\end{proof}

\begin{proof}[Proof of Lemma \ref{Lemma:jonathan_smooth_dentisy_estimation}.]
First note the equivalence between Sobolev norm $\|\cdot\|_{\bH^s}$ and Besov norm $\|\cdot\|_{B^s_{2,2}}$ and the remaining proof adapts straightforward from \cite{Niles_smooth_density_minimax22}. To obtain an estimator with $L \leq \tilde{\mu} \leq U$, in Lemma 8 of \cite{Niles_smooth_density_minimax22}, one projects the wavelet estimator to $\cD' := \{\mu\in L^2(\Omega), L \leq \mu \leq U \}$. The equivalence between $W_2$ distance and $\dot{\bH}^{-1}$ norm is given in \cite{Peyre_W_2_H_-1_equivalence18}.
\end{proof}

\begin{proof}[Proof of Proposition \ref{prop_bounded_dual_potentials}.]
Since $f_{*,\text{mix}} = -\sum_{j=1}^{m-1}\frac{\omega_j}{\omega_m} f_{*,j}$, it suffices to 
prove the claim for $i\in[m-1]$. Normalization gives $f_{*,i}\leq 0 $, 
\begin{align*}
    f_{*,i}^c (y) = \inf_{x\in\Omega}\bigl(\frac{1}{2}\|x-y\|^2 - f_{*,i}(x) \bigr) \geq  \inf_{x\in\Omega}\bigl( 0 - f_{*,i}(x) \bigr)=0.
\end{align*}
Also, for $\|c\|_\infty := \sup_{x,y\in\Omega}\frac{1}{2}\|x-y\|^2$,
\begin{align*}
    f_{*,i}^c (y) = \inf_{x\in\Omega}\bigl(\frac{1}{2}\|x-y\|^2 - f_{*,i}(x) \bigr) \leq  \inf_{x\in\Omega}\bigl( \|c\|_\infty - f_{*,i}(x) \bigr) = \|c\|_\infty.
\end{align*}
We know by Theorem 4 in \cite{kim2025sobolevgradientascentoptimal} that $f_{*,i}$ is c-concave for all $i\in\{1,\dots,m-1,\text{mix}\}$. Hence, 
\begin{align*}
    f_{*,i} (x)= f_{*,i}^{cc} (x) = \inf_{y\in\Omega}\bigl(\frac{1}{2}\|x-y\|^2 - f^c_{*,i}(y) \bigr) \geq  \inf_{y\in\Omega}\bigl( 0 - f^c_{*,i}(y) \bigr) \geq -\|c\|_\infty.
\end{align*}
So we have 
\begin{align*}
    -\|c\|_\infty \leq f_{*,i} (x) \leq 0 \quad \text{and}\quad 0 \leq \varphi_{*,i} (x) \leq 2\|c\|_\infty. 
\end{align*}
Since $f_{*,i} \in\bF_{\alpha,\beta}$, Lemma \ref{Young_transform_curvature_upper_lower_bound} gives that $\varphi^*_{*,i}$ is $1/\alpha$-smooth and we then have
$$\|\nabla \varphi^*_{*,i}\|^2 \leq \frac{2}{\alpha} \sup_{x,y} (\varphi^*_{*,i}(x) - \varphi^*_{*,i}(y))\leq \frac{4\|c\|_\infty}{\alpha}.$$
This completes the proof.
\end{proof}

\section{Technical lemmas}
The following proposition gathers some useful facts about \(c\)-transform.
\begin{proposition}\label{property_c_transform} Let $ \eta$ be a continuous function on $\Omega$. Then at points where the relevant gradients exist, it holds true that
\begin{itemize}
\item 
$  \eta^c(x) = \frac{1}{2} \left\| T_{  \eta^c}(x) - x \right\|^2-  \eta\left(T_{  \eta^c}(x)\right) $,
\item 
$\nabla   \eta\left( T_{  \eta^c}(x) \right) = T_{\eta^c}(x) - x$,
\item $\eta^c(y) = \frac{1}{2} \|y\|^2 - \theta^*(y)$, with $\theta(\cdot) = \frac{\|\cdot\|^2}{2} - \eta(\cdot)$,
\item $\nabla \eta^c(y) = y - \nabla \theta^*(y)$ with $\theta(\cdot) = \frac{\|\cdot\|^2}{2} - \eta(\cdot)$,
\item $\eta(\cdot)$ is c-concave $\iff\theta(\cdot):=\frac{\|\cdot\|^2}{2} - \eta(\cdot)$ is convex.
\end{itemize}
\end{proposition}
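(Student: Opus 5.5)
We prove the five items of Proposition \ref{property_c_transform} in a convenient order, starting from the algebraic identity behind the third item. For $y\in\Omega$ and $\theta=\frac{\|\cdot\|^2}{2}-\eta$, expanding the cost gives
\[
\eta^c(y)=\inf_{x\in\Omega}\Bigl\{\tfrac12\|y\|^2-\langle x,y\rangle+\tfrac12\|x\|^2-\eta(x)\Bigr\}
=\tfrac12\|y\|^2-\sup_{x\in\Omega}\bigl\{\langle x,y\rangle-\theta(x)\bigr\}
=\tfrac12\|y\|^2-\theta^*(y).
\]
This is the third item, with $\theta^*$ the Young--Fenchel transform over $\Omega$ from the notation section. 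At points where $\theta^*$ is differentiable we differentiate to get $\nabla\eta^c(y)=y-\nabla\theta^*(y)$, which is the fourth item. Hence $T_{\eta^c}(y)=y-\nabla\eta^c(y)=\nabla\theta^*(y)$.

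Next we use the envelope (Danskin) characterization of $\nabla\theta^*$. Since $\Omega$ is compact and $\theta$ is continuous, the supremum defining $\theta^*(y)$ is attained. At a point of differentiability of the convex function $\theta^*$, the subdifferential $\partial\theta^*(y)$ is a singleton and contains every maximizer, so the maximizer is unique and equals $\nabla\theta^*(y)=T_{\eta^c}(y)$. Thus $x_y:=T_{\eta^c}(y)$ attains the infimum in the definition of $\eta^c(y)$, i.e.
\[
\eta^c(y)=\tfrac12\|x_y-y\|^2-\eta(x_y).
\]
This is the first item. For the second item, use the first-order optimality condition of $x\mapsto\frac12\|x-y\|^2-\eta(x)$ at its minimizer $x_y$, where $\eta$ is differentiable: $x_y-y-\nabla\eta(x_y)=0$, i.e.\ $\nabla\eta(T_{\eta^c}(y))=T_{\eta^c}(y)-y$. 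This is where the hedge ``where the relevant gradients exist'' is needed. If $x_y$ lies on $\partial\Omega$, the stationarity condition must be read as a variational inequality over the convex set $\Omega$. In the paper's applications the relevant points are interior, or $\eta\in C^{1,1}$ with the Brenier structure, so we state the condition at points where $x_y$ is interior, or where the normal cone term vanishes.

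For the fifth item, note that $\eta$ is $c$-concave iff $\eta=\zeta^c$ for some $\zeta$, iff $\eta=\eta^{cc}$. Applying the third item to $\zeta$ gives $\zeta^c=\frac{\|\cdot\|^2}{2}-\vartheta^*$ with $\vartheta=\frac{\|\cdot\|^2}{2}-\zeta$. Hence $\theta=\frac{\|\cdot\|^2}{2}-\eta=\vartheta^*$ is a supremum of affine functions and therefore convex. Conversely, if $\theta$ is convex (and lower semicontinuous, since $\eta$ is continuous), Fenchel--Moreau on $\Omega$ gives $\theta=\theta^{**}$, with the biconjugate taken over $\Omega$ and restricted to $\Omega$. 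Applying the third item twice then yields $\eta^{cc}=\frac{\|\cdot\|^2}{2}-\theta^{**}=\eta$.

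The main obstacle is the boundary and domain bookkeeping: conjugates taken over the compact set $\Omega$ rather than $\bR^d$, and first-order conditions at boundary maximizers. We handle this by restricting all gradient identities to points of differentiability and interior attainment, consistent with the proposition's qualifier.
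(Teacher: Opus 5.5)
The paper's proof is only a pointer to Santambrogio's book. Your arguments for the first four items are correct and standard:
\begin{itemize}
\item expanding the cost gives the third item, and differentiating it gives the fourth;
\item the Danskin argument gives the first item: every maximizer of $x\mapsto\langle x,y\rangle-\theta(x)$ over $\Omega$ lies in $\partial\theta^*(y)$, which is a singleton where $\theta^*$ is differentiable;
\item first-order optimality gives the second item.
\end{itemize}
Restricting the second item to interior minimizers is necessary, not merely cautious. Take $\Omega=[0,1]$ and $\eta(x)=-x$. Then $\eta^c(y)=\tfrac12y^2$, so $T_{\eta^c}(y)=0\in\partial\Omega$ for $y\in[0,1)$. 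But $\nabla\eta(0)=-1\neq -y=T_{\eta^c}(y)-y$ there.

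The forward implication in the fifth item is correct. The converse, however, breaks at the step where you invoke Fenchel--Moreau ``on $\Omega$.'' Let $\iota_\Omega$ be the convex indicator ($0$ on $\Omega$, $+\infty$ off it), so that your $\theta^*$ is the conjugate of $\theta+\iota_\Omega$. Fenchel--Moreau then gives $\theta(x)=\sup_{y\in\bR^d}\{\langle x,y\rangle-\theta^*(y)\}$, with $y$ ranging over all of $\bR^d$. Applying the third item twice, with $c$-transforms over $\Omega$ as in the paper's notation, instead yields
\[
\eta^{cc}(x)=\tfrac12\|x\|^2-\sup_{y\in\Omega}\bigl\{\langle x,y\rangle-\theta^*(y)\bigr\}\;\ge\;\eta(x).
\]
Equality at $x$ holds if and only if some $y\in\Omega$ attains Fenchel--Young equality, i.e.\ $\partial(\theta+\iota_\Omega)(x)\cap\Omega\neq\emptyset$.

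This condition can fail, and the implication itself is false as stated. In the same example, $\theta(x)=\tfrac12x^2+x$ is convex on $[0,1]$. Yet $\eta^{cc}(x)=\tfrac12x^2-x\neq\eta(x)$ for $x\in(0,1]$, so $\eta$ is not $c$-concave; indeed $\theta'(x)=x+1\notin\Omega$ for $x>0$.

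The equivalence does hold in two settings:
\begin{itemize}
\item the whole-space setting of the cited reference, with functions valued in $\bR\cup\{-\infty\}$ and transforms over $\bR^d$;
\item on $\Omega$, under the extra hypothesis that $\partial(\theta+\iota_\Omega)(x)$ meets $\Omega$ for every $x\in\Omega$ (e.g.\ $\nabla\theta(\Omega)\subseteq\Omega$).
\end{itemize}
Under that extra hypothesis your argument closes by inserting such a $y$ into the supremum. Without it, the step is unjustified.
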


\begin{proof}
    See Section 1.2 of \cite{santambrogio2015optimal}.
\end{proof}

\begin{lemma} \label{push_forward_norm_stability}
Provided that 
$T = \nabla \eta$ for some $\lambda$-strongly convex and $\Lambda$-smooth function $\eta$,  
$$ \|T\sharp \rho \|_{\dot{\bH}^{-1}}
    \leq C_T \|\rho \|_{\dot{\bH}^{-1}},$$
where $C_T = \Lambda/\lambda^{d/2}$.
\end{lemma}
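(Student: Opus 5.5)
The plan is to argue by duality, testing the pushforward against an arbitrary function $\phi\in\dot{\bH}^1(\Omega)$ with $\|\phi\|_{\dot{\bH}^1}\le 1$. By definition of the pushforward,
\[
\int_\Omega \phi \, d(T\sharp\rho) = \int_\Omega (\phi\circ T)\, d\rho \le \|\phi\circ T\|_{\dot{\bH}^1}\,\|\rho\|_{\dot{\bH}^{-1}},
\]
where $\rho$ is a signed density of zero total mass (a difference of two densities, as in every application in the paper). The zero-mass property of $\rho$ lets us subtract the mean of $\phi\circ T$ without changing the left-hand side, so $\phi\circ T$ may be regarded as an element of $\dot{\bH}^1(\Omega)$. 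It then suffices to show
\[
\|\phi\circ T\|_{\dot{\bH}^1}\le C_T\|\phi\|_{\dot{\bH}^1}.
\]
Taking the supremum over $\phi$ gives the claim.

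\textbf{Gradient bound.} For the key estimate I will compute, using the chain rule,
\[
\nabla(\phi\circ T)(x)=\nabla^2\eta(x)\,\nabla\phi(T(x)).
\]
Since $\eta$ is $\Lambda$-smooth, $\|\nabla^2\eta(x)\|_{op}\le\Lambda$, and hence
\[
\int_\Omega\|\nabla(\phi\circ T)\|^2dx\le \Lambda^2\int_\Omega\|\nabla\phi(T(x))\|^2dx.
\]

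\textbf{Change of variables.} I will substitute $y=T(x)=\nabla\eta(x)$. Strong convexity makes $T$ injective with $\det\nabla^2\eta\ge\lambda^d$, so $dx = dy/\det\nabla^2\eta(T^{-1}(y))\le \lambda^{-d}dy$. This yields
\[
\int_\Omega\|\nabla(\phi\circ T)\|^2dx\le \Lambda^2\lambda^{-d}\int_{T(\Omega)}\|\nabla\phi\|^2dy .
\]
Taking square roots gives the constant $C_T=\Lambda/\lambda^{d/2}$.

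\textbf{Main obstacle.} The delicate point is the domain: we need $T(\Omega)\subseteq\Omega$ so that the last integral is bounded by $\|\phi\|_{\dot{\bH}^1}^2$. In the paper's usage $T$ is always a map $\Omega\to\Omega$ (pushforwards between measures on $\Omega$; cf.\ the notation $T:\Omega\to\Omega$), so I will take this as part of the hypothesis. Should one want more generality, I would instead extend $\phi$ to a neighborhood, at the cost of an extension constant depending on $\Omega$.

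\textbf{Regularity issues.} The remaining details are routine. The chain rule for Sobolev $\phi$ composed with a bi-Lipschitz $C^{1,1}$ map holds by density of smooth functions. The Hessian exists almost everywhere by Alexandrov's theorem (or directly, since $\eta\in C^{1,1}$), and the area formula applies to Lipschitz injective maps.
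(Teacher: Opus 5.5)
Your proof is correct and follows the paper's argument essentially line by line: duality against unit-ball test functions in $\dot{\bH}^1$, the chain-rule bound $|\nabla(\phi\circ T)|\le\Lambda\,|\nabla\phi\circ T|$, and the change of variables $y=T(x)$ with $\det\nabla^2\eta\ge\lambda^d$. Your extra remarks, that $\rho$ must have zero total mass so that $\phi\circ T$ can be recentred into $\dot{\bH}^1$ and that $T(\Omega)\subseteq\Omega$ is needed for the final integral, make explicit two hypotheses that the paper's proof uses only tacitly.
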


\begin{proof}
Note that
\begin{align*}
    \| T\sharp \rho \|_{\dot{\bH}^{-1}} = \sup_{ \|\psi\|_{\dot{\bH}^{1}} \leq 1 } \int \psi(y) d (T\sharp \rho)(y) = \sup_{ \|\psi\|_{\dot{\bH}^{1}} \leq 1 } \int \psi(T(y)) d \rho(y).
\end{align*}
    Write $\varphi (\cdot) = \psi (T(\cdot))$. Chain rule gives that $\nabla \varphi(x) = ( \nabla T(x) )^T \nabla \psi (T(x))  $ so that $$ |\nabla \varphi(x)| \leq \Lambda |\nabla \psi (T(x)) |. $$
As a result, 
\begin{align*}
    \|\nabla\varphi\|^2_{L^2} = \int |\nabla \varphi (x)|^2 dx \leq \Lambda^2 \int |\nabla \psi (T(x))|^2 dx = \Lambda^2 \int |\nabla \psi (y)|^2 |\det D T^{-1}(y)|dy \leq \frac{\Lambda^2}{\lambda^d} \|\nabla\psi\|^2_{L^2}.
\end{align*}
This means that for $C_T = \Lambda/\lambda^{d/2}$,
\begin{align*}
     \int \psi(y) d (T\sharp \rho)(y) = \int \varphi(y) d \rho(y) = C_T\int \frac{\varphi(y)}{C_T} d \rho(y) \leq C_T \|\rho \|_{\dot{\bH}^{-1}}.
\end{align*}
\end{proof}

\begin{lemma}\label{Young_transform_curvature_upper_lower_bound}
    Suppose that $\phi$ is $\alpha$-strongly convex and $\beta$-smooth, namely
    \begin{align*}
      \alpha I  \preceq\nabla^2 \phi(x) \preceq \beta I,
    \end{align*}
     then $\phi^*$ is  $1/\alpha$-smooth and $1/\beta$-strongly convex, i.e.,
    \begin{align*}
      \frac{1}{\beta} I  \preceq\nabla^2 \phi^*(x) \preceq \frac{1}{\alpha} I.
    \end{align*}
\end{lemma}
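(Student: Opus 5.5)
The plan is to read the Hessian sandwich $\alpha I \preceq \nabla^2\phi \preceq \beta I$ as saying that $\nabla\phi$ is a bi-Lipschitz diffeomorphism onto its image. Then $\nabla\phi^*$ is its inverse, so $\nabla^2\phi^*$ is the inverse matrix of $\nabla^2\phi$ evaluated at the preimage point, and the claimed bounds follow by inverting the eigenvalue bounds.

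\textbf{Step 1: $\phi^*$ is differentiable with $\nabla\phi^* = (\nabla\phi)^{-1}$.}
\begin{itemize}
\item Since $\phi$ is $\alpha$-strongly convex, for each $x$ the supremum defining $\phi^*(x)=\sup_y\{\langle x,y\rangle-\phi(y)\}$ is attained at a unique point $y(x)$.
\item By Danskin's theorem, or equivalently the envelope characterization of subgradients, $\partial\phi^*(x)=\{y(x)\}$. Hence $\phi^*$ is differentiable with $\nabla\phi^*(x)=y(x)$.
\item When the maximizer is interior, first-order optimality gives $\nabla\phi(y(x))=x$, so $\nabla\phi^*=(\nabla\phi)^{-1}$ on the range of $\nabla\phi$.
\end{itemize}
The argument is cleanest on $\bR^d$, or on the range $\nabla\phi(\Omega)$ in the paper's setting where $\phi$ is defined on $\Omega$. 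I take this to be the intended meaning, since the lemma is applied only at points of the form $\nabla\varphi_i(\cdot)$.

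\textbf{Step 2: differentiate the inverse.}
\begin{itemize}
\item $\nabla^2\phi(y)\succeq\alpha I$ is invertible, so by the inverse function theorem $\nabla\phi$ is a local $C^1$ diffeomorphism.
\item Strong monotonicity, $\langle\nabla\phi(y)-\nabla\phi(y'),y-y'\rangle\ge\alpha\|y-y'\|^2$, makes $\nabla\phi$ globally injective.
\item Differentiating the identity $\nabla\phi(\nabla\phi^*(x))=x$ gives
\[
\nabla^2\phi^*(x)=\bigl[\nabla^2\phi(\nabla\phi^*(x))\bigr]^{-1}.
\]
\end{itemize}
For a symmetric matrix $A$ with $\alpha I\preceq A\preceq\beta I$, the eigenvalues of $A^{-1}$ are the reciprocals of those of $A$, so $\beta^{-1}I\preceq A^{-1}\preceq\alpha^{-1}I$. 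This yields $\frac1\beta I\preceq\nabla^2\phi^*(x)\preceq\frac1\alpha I$.

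\textbf{Main obstacle: regularity.} The definition of $\bF_{\alpha,\beta}$ only gives $C^{1,1}$ functions, so $\nabla^2\phi$ exists merely almost everywhere.

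\begin{itemize}
\item \emph{First option:} avoid Hessians and prove the integrated statements directly.
  \begin{itemize}
  \item \textbf{Smoothness of $\phi^*$:} strong monotonicity of $\nabla\phi$ with $y=\nabla\phi^*(x)$ and $y'=\nabla\phi^*(x')$ gives $\|\nabla\phi^*(x)-\nabla\phi^*(x')\|\le\alpha^{-1}\|x-x'\|$. This is exactly $1/\alpha$-smoothness.
  \item \textbf{Strong convexity of $\phi^*$:} $\beta$-smoothness of the convex function $\phi$ gives co-coercivity, $\langle\nabla\phi(y)-\nabla\phi(y'),y-y'\rangle\ge\beta^{-1}\|\nabla\phi(y)-\nabla\phi(y')\|^2$. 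Translated through $\nabla\phi^*$, this reads $\langle x-x',\nabla\phi^*(x)-\nabla\phi^*(x')\rangle\ge\beta^{-1}\|x-x'\|^2$, which is $1/\beta$-strong monotonicity of $\nabla\phi^*$, i.e. $1/\beta$-strong convexity.
  \end{itemize}
  Co-coercivity holds on convex domains for convex $\beta$-smooth functions; on $\bR^d$ this is the Baillon--Haddad theorem.
\item \emph{Second option:} mollify $\phi$, run Steps 1--2, and pass to the limit using the stability of conjugation under uniform convergence.
\end{itemize}

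The gradient-monotonicity route is the one I would write, presenting the Hessian computation of Step 2 as the heuristic for the twice-differentiable case.
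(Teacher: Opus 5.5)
Your gradient-monotonicity route is essentially the paper's proof: part (i) is the same strong-monotonicity-plus-Cauchy--Schwarz argument transferred through $\nabla\phi^*=(\nabla\phi)^{-1}$, and part (ii) likewise starts from co-coercivity of $\nabla\phi$. Your handling of (ii) is actually tighter than the paper's. You keep the inner product and get $\langle x-x',\nabla\phi^*(x)-\nabla\phi^*(x')\rangle\ge\beta^{-1}\|x-x'\|^2$, i.e.\ strong monotonicity and hence strong convexity, directly; the paper instead applies Cauchy--Schwarz and only obtains the reverse-Lipschitz bound $\|\nabla\phi^*(y_1)-\nabla\phi^*(y_2)\|\ge\beta^{-1}\|y_1-y_2\|$, which does not by itself imply strong convexity (and your restriction to the range of $\nabla\phi$ is the right reading when $\phi$ lives on a compact $\Omega$).
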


\begin{proof}
(i) From the strong convexity of $\phi$, we obtain that for all $x_1,x_2$,
    \begin{align*}
        \langle \nabla \phi(x_1) -\nabla \phi(x_2), x_1 - x_2 \rangle \geq \alpha \|x_1 - x_2\|^2.
    \end{align*}
Write $y_i = \nabla \phi(x_i)$ for $i=1,2$ and we have
Cauchy-Schwarz inequality yields that 
\begin{align*}
     \|y_1 - y_2\| \geq \alpha \|x_1 - x_2\|.
\end{align*}
Note that $\nabla \phi^*(\cdot) = (\nabla\phi)^{-1} (\cdot)$, we get $x_i = (\nabla\phi)^{-1} (y_i) = \nabla\phi^*(y_i)$. Thus, 
\begin{align*}
\|\nabla\phi^*(y_1) - \nabla\phi^*(y_2) \| \leq \frac{1}{\alpha} \|y_1 - y_2\|,
\end{align*}
and the $1/\alpha$ smoothness of $\nabla\phi^*$ is proved.

(ii) From the equivalent definition of smoothness of $\phi$, we obtain that for all $x_1,x_2$,
    \begin{align*}
        \langle \nabla \phi(x_1) -\nabla \phi(x_2), x_1 - x_2 \rangle \geq \frac{1}{\beta} \| \nabla \phi(x_1) -\nabla \phi(x_2)\|^2.
    \end{align*}
Write $y_i = \nabla \phi(x_i)$ for $i=1,2$ and we have
Cauchy-Schwarz inequality yields that 
\begin{align*}
 \|x_1 - x_2\|  \geq \frac{1}{\beta} \|y_1 -y_2 \|. 
\end{align*}
Similarly note that $x_i = \nabla\phi^*(y_i)$, we have
\begin{align*}
 \|\nabla\phi^*(y_1) - \nabla\phi^*(y_2)\|  \geq \frac{1}{\beta} \|y_1 -y_2 \|, 
\end{align*}
which indicates the $1/\beta$ strong convexity of $\phi^*$.
\end{proof}

\begin{lemma}\label{change_of_variable_middle_step}
Suppose that $\psi$ is $\alpha$-strongly convex and $\beta$-smooth. Then we have that
\begin{align*}
\alpha \| \nabla\phi^*(y) - \nabla\psi^*(y) \| \leq
\| \nabla \psi( \nabla \phi^*(y) ) - \nabla \phi( \nabla \phi^*(y)) \| 
\leq \beta 
\| \nabla\phi^*(y) - \nabla\psi^*(y) \|.  
\end{align*}
\end{lemma}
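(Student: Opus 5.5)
The statement to prove is Lemma \ref{change_of_variable_middle_step}. The plan is to reduce it to the elementary fact that the gradient of an $\alpha$-strongly convex, $\beta$-smooth function is bi-Lipschitz. Strong convexity gives $\langle \nabla\psi(x_1)-\nabla\psi(x_2), x_1-x_2\rangle \ge \alpha\|x_1-x_2\|^2$, and Cauchy--Schwarz then yields
\[
\|\nabla\psi(x_1)-\nabla\psi(x_2)\|\ge \alpha\|x_1-x_2\|,
\]
exactly as in part (i) of the proof of Lemma \ref{Young_transform_curvature_upper_lower_bound}. Smoothness gives $\|\nabla\psi(x_1)-\nabla\psi(x_2)\|\le \beta\|x_1-x_2\|$ directly. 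I will state both on $\Omega$, which is convex, so the segment arguments (or integrating the Hessian bounds along segments) are valid.

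Fix $y$ and set $x_1 := \nabla\phi^*(y)$ and $x_2 := \nabla\psi^*(y)$. The key identity is that $\nabla\psi\circ\nabla\psi^* = \mathrm{id}$. Since $\psi$ is strongly convex and differentiable, $\nabla\psi^*$ is the inverse of $\nabla\psi$, so $\nabla\psi(x_2)=y$. The same holds for $\phi$ at points where $\phi$ is differentiable and strictly convex; this is the setting in which the lemma is used, with $\phi=\varphi_i$ convex. Hence $\nabla\phi(x_1)=y$ as well. Therefore
\[
\nabla\psi(\nabla\phi^*(y)) - \nabla\phi(\nabla\phi^*(y)) = \nabla\psi(x_1) - y = \nabla\psi(x_1)-\nabla\psi(x_2).
\]
Applying the two-sided bi-Lipschitz bound to this difference gives
\[
\alpha\|x_1-x_2\| \le \|\nabla\psi(x_1)-\nabla\psi(x_2)\| \le \beta\|x_1-x_2\|,
\]
which is precisely the claim.

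The main obstacle is justifying $\nabla\phi(\nabla\phi^*(y))=y$ and $\nabla\psi(\nabla\psi^*(y))=y$ when the points lie in the compact domain $\Omega$ rather than in $\mathbb{R}^d$. There the Legendre transform is taken over $\Omega$, and at boundary points the subdifferential may be larger than the gradient. I would handle this by restricting to $y$ in the image $\nabla\phi(\Omega)\cap\nabla\psi(\Omega)$. This is where the lemma is applied: $\mu_i$-a.e. $y$ lies in these images under Assumption \ref{assumption_Wasserstein_potential}, and a.e. differentiability of convex functions covers the remaining measure-zero issues. On that set, the first-order optimality condition for the supremum defining $\phi^*(y)$ is attained at an interior or differentiable point, which gives the inverse relation, as in Proposition \ref{property_c_transform}.
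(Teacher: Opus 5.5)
Your argument is correct and matches the paper's proof. Set $x=\nabla\phi^*(y)$ and $z=\nabla\psi^*(y)$, use $\nabla\phi(x)=y=\nabla\psi(z)$ to rewrite the middle quantity as $\nabla\psi(x)-\nabla\psi(z)$, and apply the two-sided bi-Lipschitz bound for $\nabla\psi$ from $\alpha$-strong convexity and $\beta$-smoothness.

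Restricting to $y\in\nabla\phi(\Omega)\cap\nabla\psi(\Omega)$ makes explicit an inverse-relation hypothesis the paper leaves implicit; with the Legendre transform taken over $\Omega$, the inequality can genuinely fail outside this set. However, Assumption~\ref{assumption_Wasserstein_potential} only concerns $\vf_*$, so it does not by itself give this image condition $\mu_i$-a.e.\ for the arbitrary $\vf,\vg\in\bF^{m-1}_{\alpha,\beta}$ appearing in Lemma~\ref{Lemma:Strong_concavity}.
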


\begin{proof}
(i) The $\beta$-smoothness of $\psi$ gives that for any $x,z$,
\begin{align*}
\| \nabla \psi( x ) - \nabla \psi( z) \| \leq \beta 
\| x - z \|.
\end{align*}
Take $\nabla\psi(z) = y = \nabla\phi(x)$ and we have
\begin{align*}
\| \nabla \psi( x ) - \nabla \phi( x ) \| \leq \beta 
\| \nabla\phi^*(y) - \nabla\psi^*(y) \|. 
\end{align*}
Namely,
\begin{align*}
\| \nabla \psi( \nabla\phi^*(y) ) - \nabla \phi( \nabla\phi^*(y) ) \|
\leq 
\beta \| \nabla\phi^*(y) - \nabla\psi^*(y) \|.
\end{align*}
(ii) The $\alpha$-strong convexity of $\psi$ gives that for any $x,z$,
\begin{align*}
\| \nabla \psi( x ) - \nabla \psi( z) \| \geq \alpha
\| x - z \|.
\end{align*}
Take $\nabla\psi(z) = y = \nabla\phi(x)$ and we have
\begin{align*}
\| \nabla \psi( x ) - \nabla \phi( x ) \| \geq \alpha
\| \nabla\phi^*(y) - \nabla\psi^*(y) \|. 
\end{align*}
Namely,
\begin{align*}
\| \nabla \psi( \nabla\phi^*(y) ) - \nabla \phi( \nabla\phi^*(y) ) \|
\geq 
\alpha \| \nabla\phi^*(y) - \nabla\psi^*(y) \|.
\end{align*}

\end{proof}

\begin{lemma}[Polyak-\L{}ojasiewicz inequality]\label{PLineq}
Let \( S \subset \mathbb{H} \) be a convex subset of a Hilbert space \( \mathbb{H} \) and \( f : \mathbb{H} \to \mathbb{R} \) be a \(\beta\)-strongly convex function on \( S \). Then, we have for all \( x \in S \),
\[
	f(x) - \inf_{y \in \bH} f(y) \leq \frac{1}{2\beta} \|\nabla f(x)\|_{\mathbb{H}}^2.
\]
\end{lemma}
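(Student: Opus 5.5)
The plan is the standard completing-the-square argument, organized so that the infimum is taken over all of $\mathbb{H}$, exactly as in the statement. Fix $x\in S$. The key input is the first-order strong convexity bound anchored at $x$:
\begin{equation*}
f(y)\;\ge\; f(x)+\langle \nabla f(x),\,y-x\rangle_{\mathbb{H}}+\frac{\beta}{2}\|y-x\|_{\mathbb{H}}^2 .
\end{equation*}
I need this bound for every test point $y\in\mathbb{H}$, not only for $y\in S$. I will read the hypothesis ``$f$ is $\beta$-strongly convex on $S$'' in that sense: the quadratic lower bound holds at every base point $x\in S$ against arbitrary $y\in\mathbb{H}$. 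This is the form in which the paper produces and uses curvature. In Lemma~\ref{Lemma:Strong_concavity} the base point $\vf$ is constrained to $\bF^{m-1}_{\alpha,\beta}$, while the comparison point $\vg$ only enters through the convexity of $\frac{\|\cdot\|^2}{2}-g_{\text{mix}}$ and the curvature of $\psi_i$.

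Given the bound, the second step is pointwise minimization of its right-hand side over $y\in\mathbb{H}$. Setting $h=y-x$, the map
\[
h\mapsto \langle \nabla f(x),h\rangle_{\mathbb{H}}+\frac{\beta}{2}\|h\|_{\mathbb{H}}^2
\]
is minimized over the whole Hilbert space at $h=-\beta^{-1}\nabla f(x)$. Here $\nabla f(x)$ is identified with an element of $\mathbb{H}$ by Riesz representation, which is how $\|\nabla f(x)\|_{\mathbb{H}}$ is to be read; in the $\sH_{m-1}$ setting the Riesz map is $(-\Delta)^{-1}$, as in the proof of Proposition~\ref{proposition_Norm_dual_objective_gradient}. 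The minimum value is $-\frac{1}{2\beta}\|\nabla f(x)\|_{\mathbb{H}}^2$. Hence $f(y)\ge f(x)-\frac{1}{2\beta}\|\nabla f(x)\|_{\mathbb{H}}^2$ for every $y\in\mathbb{H}$. Taking the infimum over $y\in\mathbb{H}$ and rearranging gives exactly the claimed inequality. No existence of a minimizer is needed, and it does not matter whether any minimizer lies in $S$.

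The main obstacle is the first step: justifying the quadratic lower bound for $y\notin S$. If strong convexity were known only along segments inside $S$, the argument would bound $f(x)-\inf_S f$ rather than $f(x)-\inf_{\mathbb{H}} f$. I would therefore first try to derive the bound directly in the form used above, with an arbitrary comparison point. This is what the Bregman computation in the proof of Lemma~\ref{Lemma:Strong_concavity} delivers, with $\beta$ replaced by $\lambda$ and concavity in place of convexity. In the application (inequality~\eqref{inequality_PL_proof}) the lemma is applied to $-\widetilde{\sD}$ with that $\lambda$, so the statement as worded is what is used there.
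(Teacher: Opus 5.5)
Your minimization step is the standard completing-the-square argument, and under your strengthened hypothesis it is complete. The paper itself only cites a reference for this lemma. The gap is in the first step, which you correctly flag as the obstacle but then resolve incorrectly.

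\textbf{The statement as worded needs changing.} With the natural meaning of ``$\beta$-strongly convex on $S$'' (the two-point inequality for $x,y\in S$), the statement with $\inf_{y\in\mathbb{H}}$ is false. Take $\mathbb{H}=\bR$, $f(x)=\frac{x^2}{2}-x^4$ and $S=[0,\frac{1}{10}]$. Then $f''\ge\frac12$ on $S$ and $f'(0)=0$, yet $\inf_{\bR}f=-\infty$. So some reinterpretation is forced.

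\textbf{Your reinterpretation fails in the paper's setting.} You take the base point in $S$ and the comparison point anywhere in $\mathbb{H}$. That is not what Lemma~\ref{Lemma:Strong_concavity} provides. The lemma also assumes $\vg\in\bF^{m-1}_{\alpha,\beta}$, and its proof uses this in two places. The quadratic term comes from the $\alpha$-strong convexity of $\xi_i=\frac12\|\cdot\|^2-g_i$, i.e.\ of the comparison potential. Step (viii) applies Lemma~\ref{change_of_variable_middle_step} with $\psi_i$ being $\beta$-smooth. The ``curvature of $\psi_i$'' you mention is exactly the membership $g_i\in\bF_{\alpha,\beta}$.

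In fact no version with arbitrary $\vg$ can hold for this functional. Take $\vg=\vf+(h_1,0,\dots,0)$ with $h_1$ rapidly oscillating, $\|h_1\|_\infty\le 2\epsilon$ and $\|\nabla h_1\|_{L^2}\asymp\epsilon^{-1}$. The $c$-transforms are $1$-Lipschitz in the sup norm, and $\nabla\sD(\vf)$ pairs $h_1$ against a difference of probability measures. Hence the left side of \eqref{strong_concavity} is $O(\epsilon)$ in absolute value, while the right side is of order $-\lambda\epsilon^{-2}$.

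\textbf{The repair.} Run your same argument with $y$ restricted to $S$. For $x,y\in S$, apply the two-point bound and then minimize the quadratic over all $h\in\mathbb{H}$. This gives $f(y)\ge f(x)-\frac{1}{2\beta}\|\nabla f(x)\|_{\mathbb{H}}^2$, hence $f(x)-\inf_{y\in S}f(y)\le\frac{1}{2\beta}\|\nabla f(x)\|_{\mathbb{H}}^2$, with no minimizer needed.

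Contrary to your last sentence, this restricted version is all that \eqref{inequality_PL_proof} uses. Apply it to $-\widetilde{\sD}$ on $S=\{\vf\in\bF^{m-1}_{\alpha,\beta}:f_{\text{mix}}\in\bF_{\alpha,\beta}\}$. Lemma~\ref{Lemma:Strong_concavity} is available there because $\tilde\mu_i\ge L$. The base point $\vf_*$ lies in $S$ by Assumption~\ref{assumption_Wasserstein_potential}. By construction in Algorithm~\ref{alg:main}, $\tilde{\vf}_*$ maximizes $\widetilde{\sD}$ over $S$, so $\widetilde{\sD}(\tilde{\vf}_*)-\widetilde{\sD}(\vf_*)=\sup_S\widetilde{\sD}-\widetilde{\sD}(\vf_*)$. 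The correct lemma is therefore the one with $\inf_{y\in S}$.
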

\begin{proof}
	See \cite{Rigollet_Stromme_sample_complexity_EOT25}.
\end{proof}

\end{appendix}

%

\bibliographystyle{imsart-nameyear}

\end{document}